\documentclass[oneside,reqno]{amsart}
\usepackage{amsfonts}
\usepackage{amsmath}
\usepackage{amssymb}
\usepackage{graphicx}
\usepackage{version}
\usepackage{bm}
\usepackage{enumerate}
\usepackage{caption}%
\usepackage{setspace}
%\doublespacing
\newcommand{\R}{{\mathbb R}}

\setcounter{MaxMatrixCols}{30}
\addtolength{\textwidth}{2cm} \addtolength{\textheight}{3cm}
\addtolength{\oddsidemargin}{-1cm}
\addtolength{\topmargin}{-1cm}
\theoremstyle{plain}
\newtheorem{theorem}{Theorem}[section]
\newtheorem{corollary}[theorem]{Corollary}
\newtheorem{lemma}[theorem]{Lemma}
\newtheorem{proposition}[theorem]{Proposition}
\theoremstyle{definition}
\newtheorem{definition}[theorem]{Definition}
\newtheorem{example}[theorem]{Example}

\newtheorem{remark}[theorem]{Remark}

\newtheorem*{notation*}{Notation}
\numberwithin{equation}{section}

\pagestyle{plain}
\excludeversion{comment1}

\begin{document}
%\title{Iterated Transformations, Trajectories, Fractals and Subdivision}
%\title{Sequences of Function Systems and their Attractors}
\title{Attractors of Sequences of Function Systems \\ and their relation to Non-Stationary Subdivision}
\author[David Levin]{David Levin}
\author[Nira Dyn]{Nira Dyn}
\address{D. Levin, N. Dyn, School of Mathematical Sciences, Tel Aviv University, Israel}
\author[P. V. Viswanathan]{Puthan Veedu Viswanathan}
\address{P. V. Viswanathan, Department of Mathematics, Indian Institute of Technology, Delhi, India}

%\maketitle
\begin{abstract}
Iterated Function Systems (IFSs) have been at the heart of fractal geometry almost from
its origin, and several generalizations for the notion of IFS have been suggested.
Subdivision schemes are widely used in computer graphics and attempts have been made to link fractals generated by IFSs to limits
generated by subdivision schemes. With an eye towards establishing connection between non-stationary subdivision schemes and
fractals,  this paper introduces the notion of ``trajectories of maps defined by function systems" which may be considered as a new generalization of the traditional IFS. The significance and the convergence properties of `forward' and `backward' trajectories are studied.
Unlike the ordinary fractals which are self-similar at different scales, the attractors of these trajectories may have different structures at different scales.

\end{abstract}
\maketitle

\section{\bf Introduction}\label{sect1}

The concept of Iterated Function system (IFS) was introduced by Hutchinson  \cite{H} and popularized
by Barnsley \cite{B1}. IFSs form
a standard framework for describing self-referential sets such as fractals  and provide a potential new method of researching the shape and texture of images.
Due to its importance in understanding images, several extensions to the classical IFS such as Recurrent IFS, partitioned IFS and Super IFS are discussed in the literature \cite{B2,BEH,F}. Fractal functions whose graphs are attractors of suitably chosen IFS provide a new method of interpolation and approximation \cite{B1,PRM,MAN,PV1}.

\par

Subdivision schemes are efficient algorithmic methods for generating curves and surfaces from discrete sets of control
points. A subdivision scheme generates values associated with the vertices of a sequence of nested meshes,
by repeated application of a set of local refinement rules. These subdivision rules, usually linear, iteratively transform the vertices of a given mesh to vertices of a refined mesh. In recent years, the subject of subdivision has gained more popularity because of many new applications such as computer graphics. The reader may turn to \cite{CDM,DL,MP,PBP} for an introduction and survey of the mathematics of subdivision schemes and their applications.

\par

Being two different topics that had been developing independently  and in parallel, the connections  between subdivision and theory of IFS were sought after.
Later it has been observed that there is a close connection between
curves and surfaces generated by subdivision algorithms and self-similar fractals generated by IFSs \cite{SLG}. However, this relationship is established for stationary subdivision schemes. The relation between non-stationary subdivision and IFS remains obscure and unexplored.

\par
In this paper we target to establish the interconnection between the theory of IFS and non-stationary subdivision schemes. In this attempt, we introduce and study what we call  "trajectories of a sequence of transformations". Trajectories generated by a sequence of function system maps may provide new attractor sets, generalizing fractal sets, and help us to link the theory of IFS with non-stationary subdivision schemes.

\vfill\eject
\section{\bf Preliminaries} For a nonspecialist, we mention here the concepts, notation and basic results concerning traditional
 IFS and provide a brief outline of subdivision. For a detailed exposition the reader may consult \cite{B1,H} and \cite{CDM,DL} respectively.

\subsection{Basics of iterated function systems}\hfill

\medskip
Let $(X,d)$ be a complete metric space. For a function $f: X \to X$, we define
the Lipschitz constant associated with $f$ by
$$\text{Lip}(f) = \sup_{x,y \in X, x \neq y} \frac{d\big(f(x),f(y)\big)}{d(x,y)}.$$
A function $f$ is said to be Lipschitz function if $\text{Lip}(f) < + \infty$ and a contraction if $\text{Lip}(f) < 1$.
Let  $\mathbb{H}(X)$  be the collection of all nonvoid compact subsets of $X$. Then $\mathbb{H}$ is a metric space when endowed with the Hausdorff metric
$$ h (B,C) = \max \big\{d(B,C), d(C,B)\big\},$$ where $d(B,C)= \sup_{b \in B} d(b,C)= \sup_{ b \in B} \inf_{c \in C} d(b,c)$.
It is well-known that the metric space $\big(\mathbb{H}(X),h\big)$ is complete \cite{B2}.
\begin{definition}\label{defIFS}

An iterated function system, IFS for short, consists of a metric space $(X,d)$ and a finite family of continuous maps $f_i: X \to X$, $i \in \{1,2,\dots,n\}$. We denote such an IFS by $\mathcal{F}=\{X; f_i: i=1,2,\dots, n\}$.
\end{definition}
With the IFS $\mathcal{F}$ as above, one can associate a set-valued map referred to as Barnsley-Hutchinson operator. With a slight abuse of notation, we use the same symbol $\mathcal{F}$ for the IFS, the set of functions in the IFS, and for the Barnsley-Hutchinson operator defined below. Consider the function $\mathcal{F}: \mathbb{H}(X)\to \mathbb{H}(X)$
\begin{equation}\label{FX}
 \mathcal{F}(B) := \cup_{f \in \mathcal{F}} f(B),\ \ B\in \mathbb{H}(X),
\end{equation}
where $f(B):= \big\{f(b): b \in B \big\}$.
%It is well known that for a contractive IFS,  $\mathcal{F}$ in (\ref{FX}) is a contraction with contraction factor $L_{\mathcal{F}}$ \cite{B2}.
%If $f_i$ are contraction maps, the IFS is called contractive.
The contraction constant of $\mathcal{F}$ is \cite{B2}:
\begin{equation}\label{CLF}
L_{\mathcal{F}}=\max_{i=1,2,\dots, n} \text{Lip}(f_i).
\end{equation}
If $f_i$ are contraction maps, the IFS is contractive.
Therefore, by the Banach contraction principle we have
\begin{theorem}\label{theoremIFS}
Let $(X,d)$ be a complete metric space and $\mathcal{F}=\{X; f_i: i=1,2,\dots, n\}$ be an IFS with contraction constant $L_{\mathcal{F}}<1$. Then there exists a unique
set $A_{\mathcal{F}}$, such that $\mathcal{F}(A_{\mathcal{F}}) = A_{\mathcal{F}}$. Furthermore, for every $B_0 \in \mathbb{H}(X)$ the sequence $B_{k+1} =  \mathcal{F} (B_k)$ converges to $A_{\mathcal{F}}$ in $\mathbb{H}$. Also \cite{B2},
$$h(B_0,A_{\mathcal{F}}) = \frac{1}{1- L_{\mathcal{F}}}~ h(B_0, B_1).$$
\end{theorem}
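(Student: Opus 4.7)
The plan is to recast the theorem as a direct application of the Banach contraction principle on the complete metric space $\big(\mathbb{H}(X),h\big)$. Since completeness of $\big(\mathbb{H}(X),h\big)$ is quoted from \cite{B2}, the only real work is to verify that the Barnsley--Hutchinson operator $\mathcal{F}$ defined in \eqref{FX} is well-defined on $\mathbb{H}(X)$ and is a contraction with constant at most $L_{\mathcal{F}}$.

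First I would check well-definedness: for $B\in\mathbb{H}(X)$ and each continuous $f_i$, the image $f_i(B)$ is compact and nonempty; a finite union of such sets is again a nonempty compact set, so $\mathcal{F}(B)\in\mathbb{H}(X)$. The heart of the argument is the contraction estimate
\[
 h\bigl(\mathcal{F}(B),\mathcal{F}(C)\bigr)\le L_{\mathcal{F}}\,h(B,C),\qquad B,C\in\mathbb{H}(X),
\]
which I would obtain from two elementary lemmas: (i) if $f:X\to X$ is Lipschitz, then $h\bigl(f(B),f(C)\bigr)\le \mathrm{Lip}(f)\,h(B,C)$, proved by unwrapping the definition of $h$ and using $d\bigl(f(b),f(c)\bigr)\le\mathrm{Lip}(f)\,d(b,c)$; and (ii) the union bound $h\bigl(\bigcup_i A_i,\bigcup_i D_i\bigr)\le \max_i h(A_i,D_i)$, which follows from the observation that $d(\bigcup_i A_i,\bigcup_i D_i)\le\max_i d(A_i,D_i)$ coupled with the dual inequality. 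Combining (i) and (ii) with $A_i=f_i(B)$, $D_i=f_i(C)$, and taking the maximum over $i$ yields the desired bound with constant $L_{\mathcal{F}}=\max_i \mathrm{Lip}(f_i)<1$.

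Once contractivity is established, the Banach fixed point theorem supplies a unique $A_{\mathcal{F}}\in\mathbb{H}(X)$ with $\mathcal{F}(A_{\mathcal{F}})=A_{\mathcal{F}}$, and for any starting set $B_0$ the iterates $B_{k+1}=\mathcal{F}(B_k)$ converge to $A_{\mathcal{F}}$ in $h$. The quantitative estimate is the standard telescoping/geometric-series argument: from $h(B_k,B_{k+1})\le L_{\mathcal{F}}^k h(B_0,B_1)$ and the triangle inequality one obtains
\[
 h(B_0,A_{\mathcal{F}})\le \sum_{k=0}^{\infty} h(B_k,B_{k+1}) \le \frac{1}{1-L_{\mathcal{F}}}\,h(B_0,B_1),
\]
which is the bound stated (interpreted as an upper estimate rather than strict equality).

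The main obstacle is really just the contraction property of $\mathcal{F}$ on $\bigl(\mathbb{H}(X),h\bigr)$; the two-sided nature of the Hausdorff metric requires some care in step (ii), and in particular in verifying the union bound in both the ``sup--inf'' direction and its symmetric counterpart. Everything after that is bookkeeping with the Banach contraction principle.
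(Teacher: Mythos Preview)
Your proposal is correct and aligns with the paper's own treatment: the paper does not give a detailed proof but simply states the theorem as a consequence of the Banach contraction principle, with the contraction constant \eqref{CLF} quoted from \cite{B2}. Your write-up supplies exactly the details one needs to justify that invocation (well-definedness on $\mathbb{H}(X)$, the Lipschitz-image lemma, the union bound for $h$, and the standard a priori estimate), and you rightly read the displayed formula as the upper bound $h(B_0,A_{\mathcal{F}})\le \frac{1}{1-L_{\mathcal{F}}}\,h(B_0,B_1)$ rather than an equality.
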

\begin{remark}\label{remarkIFS}
\hfill
\begin{enumerate}
\item The set $A _{\mathcal{F}}$ appearing in the previous theorem is called the attractor of the IFS. The construction of $A _{\mathcal{F}}$ through iterations of the map $\mathcal{F}$ suggests the name iterated function system for $\mathcal{F}=\{X; f_i: i=1,2,\dots, n\}$.
\item The result of Theorem \ref{theoremIFS} holds even if $\mathcal{F}$ is not a contraction map, but an $\ell$-term composition of $\mathcal{F}$,
namely, $\mathcal{F}\circ \mathcal{F}\circ...\circ \mathcal{F}$ is a contraction map.
The $\ell$-term composition is a contraction if all the compositions of the form
\begin{equation}\label{lterm}
f_{i_1}\circ f_{i_2}\circ \cdot\cdot\cdot f_{i_\ell},\ \ \ i_j\in\{1,2,...,n\},
\end{equation}
are contractions.
\end{enumerate}
\end{remark}

\subsection{Basics of subdivision schemes}\label{BOS}\hfill

\medskip
A subdivision scheme is defined by a collection of real maps called refinement rules relative to a set of meshes of isolated points $$N_0 \subseteq N_1 \subseteq \dots \subseteq \mathbb{R}^s.$$
Each refinement rule maps real vector values defined on $N_k$ to real vector values defined on a refined net $N_{k+1}$.
Here we consider only scalar binary subdivision schemes, with $N_k=2^{-k}\mathbb{Z}^s$.
Given  a set of control points $p^{0}=\{p_j^0\in \mathbb{R}^m,\ \ j \in \mathbb{Z}^s\}$ at level $0$, a stationary binary subdivision scheme recursively defines new sets of points $p^k= \{p_j^k: j \in \mathbb{Z}^s\}$ at level $k \ge 1$, by the refinement rule
\begin{equation}\label{sa}
p_i^{k+1} = \sum_{j \in \mathbb{Z}^s} a_{i-2j} p_j^k,\ \ k\ge 0,
\end{equation}
or in short form,
$$p^{k+1}=S_a p^k,\ \ k\ge 0.$$
The set of real coefficients $a= \{a_j: j \in \mathbb{Z}^s\}$ that determines the refinement rule is called the mask of the scheme.
We assume that the support of the mask, $\sigma(a)=
\{j \in \mathbb{Z}^s:a_j \neq 0\}$, is finite.
$S_a$ is a bi-infinite two-slanted matrix with the entries $(S_a)_{i,j}=a_{i-2j}$.\\
A non-stationary binary subdivision scheme is defined formally as
$$p^{k+1}=S_{a^{[k]}} p^k,\ \ k\ge 0,$$
where the refinement rule at refinement level $k$ is of the form
\begin{equation}\label{sak}
p_i^{k+1} = \sum_{j \in \mathbb{Z}^s} a_{i-2j}^{[k]} p_j^k,\ \ i\in \mathbb{Z}^s.
\end{equation}
In a non-stationary scheme, the mask $a^{[k]}:= \{a_j^{[k]}: j \in \mathbb{Z}^s\}$  depends on the refinement level.
%If the $k$-th refinement step consists of $m$ refinement rules, the subdivision is said to be of arity $m$. It is noticed that higher arity %schemes have higher smoothness and approximation order while their support is smaller compared to lower arity schemes.
In univariate schemes $s=1$, there are two different rules in (\ref{sak}), depending on the parity of $i$.

In this paper we refer to two definitions of convergent subdivision. The first is the classical one in subdivision theory \cite{DL}:
\begin{definition}\label{C0conv}{\bf $C^0$-convergent subdivision}\\
A subdivision scheme is termed $C^0$-convergent if for any initial data  $p^0$ there exists a continuous function $f:\mathbb{R}^s \to \mathbb{R}^m$, such that
\begin{equation}\label{pktof}
\lim_{k\to\infty}\sup_{i\in \mathbb{Z}^s}|p_i^{k}-f(2^{-k}i)|=0,
\end{equation}
and for some initial data $f\ne 0$.
\end{definition}

\begin{remark}\label{remarkC0}
\hfill
\begin{enumerate}
\item The limit curve of a $C^0$-convergent subdivision is denoted by $p^\infty=S_a^\infty p^0$, and the function $f$ in Definition \ref{C0conv} specifies a parametrization of the limit curve.
%\item The convergence in (\ref{pktof}) is uniform in $i\in \mathbb{Z}^s$.
\end{enumerate}
\end{remark}
The analysis of subdivision schemes aims at studying the smoothness properties of the limit function $f$. For further reading see \cite{DL}.

We introduce here a weaker type of convergence using a set distance approach, influenced by IFS convergence:
\begin{definition}\label{hconv}{\bf $h$-convergent subdivision}\\
A subdivision scheme is termed $h$-convergent if for any initial data $p^0$ there
exists a set $p^\infty\subset \mathbb{R}^m$, such that
\begin{equation}\label{setlimit}
\lim_{k\to\infty}h(p^k,p^\infty)=0,
\end{equation}
where $h$ is the Euclidian-Hausdorff metric on $\mathcal{R}^m$.
The set $p^\infty$ is termed the $h$-limit of the subdivision scheme.
\end{definition}

It is clear that any $C^0$-convergent subdivision is also $h$-convergent.

\bigskip
In both subjects, IFS and subdivision, one is interested in the limits of iterative processes.
A connection between IFS and stationary subdivision is established in \cite{SLG}. In order to extend this connection to the case of non-stationary subdivision we investigate below the convergence properties of sequences of transformations in a metric space.

\section{\bf Sequences of transformations and Trajectories}
This section is intended to  introduce trajectories induced by a sequence of transformations and establish some elementary properties.
\par Let $(X,d)$ be a complete metric space. Consider a sequence of continuous transformations $\{T_i\}_{i \in N}$, $T_i: X \to X$.
\begin{definition}{\bf Forward and backward procedures:}

For the sequence of maps $\{T_i\}_{i \in N}$ we define forward and backward procedures
\begin{enumerate}
\item $ \Phi_k=T_k \circ T_{k-1} \circ \dots \circ T_1,$
\item $ \Psi_k=T_1 \circ T_2 \circ \dots \circ T_k.$
\end{enumerate}
\end{definition}

\begin{definition}{\bf Forward and backward trajectories:}

Induced by the forward and the backward procedures, we define consequent
forward and backward trajectories in $X$, starting from $x\in X$, $\{\Phi_k(x)\}$ and $\{\Psi_k(x)\}$,
%\begin{enumerate}
%\item $ \Phi_k(x)=T_k \circ T_{k-1} \circ \dots \circ T_1(x),\ \ k \in \mathbb{N},$
%\item $ \Psi_k(x)=T_1 \circ T_2 \circ \dots \circ T_k(x),\ \ k \in \mathbb{N}.$
%\end{enumerate}
\begin{equation}\label{PhiPsi}
\begin{aligned}
\Phi_k(x)=T_k \circ T_{k-1} \circ \dots \circ T_1(x)=T_k\circ\Phi_{k-1}(x),\ \ k \in \mathbb{N},\\
\Psi_k(x)=T_1 \circ T_2 \circ \dots \circ T_k(x)=\Psi_{k-1}\circ T_k(x),\ \ k \in \mathbb{N}.
\end{aligned}
\end{equation}
\end{definition}

In the present section we study the convergence of both types of trajectories. Later on we demonstrate the application
of both types to sequences of function systems and to subdivision.
%\begin{remark}
%In the rest of the paper, we shall consider trajectory as the one given in item (i) of the foregoing definition for reasons that will be evident %in due course.
%\end{remark}
%\begin{proposition}
%The trajectory of $\{T_i\}_{i\in \mathbb{N}}$  under a compact subset $A$  forms a sequence in $\big(\mathbb{H}(X),h\big)$.
%\end{proposition}
%\begin{proof}
%Proof follows at once from the well-known fact that the continuous image of a compact set is compact.
%\end{proof}
To state our next proposition, let us first introduce the following definition.
\begin{definition}
Two sequences $\{x_i\}_{i \in \mathbb{N}}$ and $\{y_i\}_{i \in \mathbb{N}}$ in a metric space $(X,d)$ are said to be asymptotically similar if $d(x_i,y_i) \to 0$ as $i \to \infty$. We denote this relation by
\begin{equation}
\{x_i\}\sim \{y_i\}.
\end{equation}
\end{definition}
\begin{proposition}\label{Equivalence}{\bf Asymptotic similarity of trajectories}\\
Let $\{T_i\}_{i \in \mathbb{N}}$ be a sequence of transformations on $X$, where each $T_i$ is a Lipschitz map with Lipschitz constant $s_i$. If $\lim_{ k \to \infty} \prod_{i=1}^k s_i =0$, then
%for both the forward and the backward trajectories, all trajectories are asymptotically similar. %I.e.,
for any $x,y\in X$,
\begin{equation}
\begin{aligned}
\{\Phi_k(x)\}\sim \{\Phi_k(y)\},\\
\{\Psi_k(x)\}\sim \{\Psi_k(y)\}.
\end{aligned}
\end{equation}
%the trajectories $\{\Phi_k(x)\}$ and $\{\Psi_k(x)\}$ $X$ form equivalent sequences in $\big(\mathbb{H}(X),h\big)$.
\end{proposition}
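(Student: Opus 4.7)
The plan is to reduce both statements to the standard fact that the Lipschitz constant of a composition is bounded by the product of the individual Lipschitz constants, and then invoke the hypothesis $\prod_{i=1}^k s_i \to 0$.

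First, I would record the one-step estimates. Since each $T_i$ is Lipschitz with constant $s_i$, for any $u,v \in X$ we have $d(T_i(u), T_i(v)) \leq s_i\, d(u,v)$. Iterating this for the forward procedure $\Phi_k = T_k \circ \Phi_{k-1}$ gives, by induction on $k$,
\begin{equation*}
d\bigl(\Phi_k(x), \Phi_k(y)\bigr) \;\leq\; s_k\, d\bigl(\Phi_{k-1}(x), \Phi_{k-1}(y)\bigr) \;\leq\; \Bigl(\prod_{i=1}^k s_i\Bigr)\, d(x,y).
\end{equation*}
For the backward procedure $\Psi_k = T_1 \circ T_2 \circ \cdots \circ T_k$, the same reasoning applies; the order of composition is irrelevant for the Lipschitz bound, so
\begin{equation*}
d\bigl(\Psi_k(x), \Psi_k(y)\bigr) \;\leq\; s_1\, d\bigl(T_2\circ\cdots\circ T_k(x), T_2\circ\cdots\circ T_k(y)\bigr) \;\leq\; \Bigl(\prod_{i=1}^k s_i\Bigr)\, d(x,y),
\end{equation*}
again by an inner induction on the number of factors.

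Finally, since $d(x,y)$ is a fixed finite constant and $\prod_{i=1}^k s_i \to 0$ by hypothesis, both right-hand sides tend to $0$, giving $\{\Phi_k(x)\} \sim \{\Phi_k(y)\}$ and $\{\Psi_k(x)\} \sim \{\Psi_k(y)\}$ as required.

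There is no real obstacle here; the statement is essentially the observation that composing Lipschitz maps multiplies their Lipschitz constants, combined with the convergence assumption on the product. The only point worth flagging is that the argument is completely symmetric in the forward/backward case: reversing the order of composition does not change the product bound, even though the dynamics of $\Phi_k$ and $\Psi_k$ are in general very different (a distinction that presumably matters for subsequent results in the paper, but not for this asymptotic-similarity estimate).
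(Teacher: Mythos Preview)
Your proof is correct and is essentially identical to the paper's own argument: both simply iterate the Lipschitz inequality through the composition to obtain the bound $\bigl(\prod_{i=1}^k s_i\bigr)\,d(x,y)$ and then invoke the hypothesis. The paper writes out only the backward case and remarks that the forward case is similar, whereas you spell out both, but there is no substantive difference.
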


Note that the condition $\lim_{k\to\infty} \prod_{i=1}^k s_i =0$ does not imply $\limsup_{k \to \infty} s_k <1$.
\begin{proof}
The proof is similar for the forward and the backward trajectories.
Let $x,y\in X$ and consider the trajectories $\{\Psi_k(x)\}$ and $\{\Psi_k(y)\}$.
Using the fact that $T_i$ is a Lipschitz map with Lipschitz constant $s_i$, we get
\begin{equation}\label{prodsi}
\begin{aligned}
d\big(\Psi_k(x),\Psi_k(y)\big)\ \le\ & s_1 d(\big(T_2 \circ T_3 \circ \dots \circ T_k(x),T_2 \circ T_3 \circ \dots \circ T_k(y)\big)) \\
& \le s_1s_2d(\big(T_3 \circ T_4 \circ \dots \circ T_k(x),T_3 \circ T_4 \circ \dots \circ T_k(y)\big)) ... \\
& \le \big(\prod_{i=1}^k s_i\big) d(x,y),
\end{aligned}
\end{equation}
from which the result follows.
\end{proof}

\begin{remark}\label{remarkT}
The condition $\lim_{ k \to \infty} \prod_{i=1}^k s_i =0$ stated in Proposition \ref{Equivalence} does not guarantee
convergence of the trajectories $\{\Phi_k(x)\}$.
\end{remark}

If $T_i=T$ $\forall i\in \mathbb{N}$, and $T$ is a Lipschitz map with Lipschitz constant $\mu<1$, then both types of trajectories are just
the fixed-point iteration trajectories $\{T^k(x)\}$, where $T^k$ is the $k$-fold autocomposition of $T$ which converge to a unique limit for any starting point $x$. It is known from the Banach contraction principle that $\{T^k(x)\}$ converges to a unique limit irrespective of the starting point $x$.
The question now arises regarding the convergence of general trajectories, i.e., which conditions guarantee the convergence of the
forward and the backward trajectories. Having in mind the applications to fractal generation and to subdivision, we would like to know
which trajectories yield new types of fractals or new types of limit functions. Let us start with the forward trajectories
$\{\Phi_k(x)\}$.
%We introduce the following conditions on the transformations $\{T_i\}_{i \in \mathbb{N}}$:

\begin{definition}\label{def3}{\bf Invariant set of $\{T_i\}$.}\\
We call $C\subseteq X$ an invariant set of a sequence of transformations $\{T_i\}_{i \in \mathbb{N}}$ if
\begin{equation}\label{C}
\forall~ x\in C,\ \ T_i(x)\in C,\ \ \forall~ i\in \mathbb{N}.
\end{equation}
\end{definition}

\begin{lemma}\label{lemma3}
Consider a sequence of transformations $\{T_i\}_{i \in \mathbb{N}}$. If there exists $q$ in $X$ such that for every $x\in X$
\begin{equation}\label{C2}
d(T_i(x),q)\le \mu d(x,q)+M,\ \ 0\le\mu <1,\ \ M\in \mathbb{R}_+,
\end{equation}
then the ball of radius $\frac{M}{1-\mu}$ centered at $q$, $B\big(q,\frac{M}{1-\mu}\big)$, is an invariant set of
$\{T_i\}_{i \in \mathbb{N}}$.
\end{lemma}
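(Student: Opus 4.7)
The plan is to fix the radius $r := \frac{M}{1-\mu}$ and verify the invariance condition (\ref{C}) from Definition \ref{def3} by a direct one-line computation. Given any $x$ with $d(x,q) \le r$ and any $i \in \mathbb{N}$, the hypothesis (\ref{C2}) yields
\[
d(T_i(x),q) \le \mu\, d(x,q) + M \le \mu r + M.
\]
The key observation is that $r$ is chosen so as to satisfy the affine fixed-point relation $\mu r + M = r$: indeed, $\mu \cdot \frac{M}{1-\mu} + M = \frac{\mu M + M(1-\mu)}{1-\mu} = \frac{M}{1-\mu} = r$. Therefore $d(T_i(x),q) \le r$, which places $T_i(x)$ back in $B(q,r)$.

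The only point deserving any care is the convention about whether $B(q,r)$ denotes the open or closed ball. The argument above directly handles the closed ball. For the open ball the same computation applies: when $d(x,q) < r$, one obtains $d(T_i(x),q) \le \mu d(x,q) + M < \mu r + M = r$, so strict inequality is preserved.

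I do not anticipate any real obstacle here; the lemma is essentially the statement that the unique fixed point of the scalar affine contraction $t \mapsto \mu t + M$ on $\mathbb{R}_+$ is $r = M/(1-\mu)$, and this level set bounds the orbit of any $T_i$ acting on distances from $q$. No structural property of $X$ beyond the metric triangle estimate encoded in (\ref{C2}) is needed, and $\{T_i\}$ need not be iterated: a single application suffices, since the statement is closure of $B(q,r)$ under each individual $T_i$ rather than under compositions.
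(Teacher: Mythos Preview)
Your proof is correct and is essentially identical to the paper's: both verify invariance by the single inequality $d(T_i(x),q)\le \mu d(x,q)+M \le \mu\frac{M}{1-\mu}+M=\frac{M}{1-\mu}$. The extra remarks on open versus closed balls and the scalar fixed-point interpretation are sound but not needed for the argument.
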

\begin{proof}
For $x\in B\big(q,\frac{M}{1-\mu}\big)$
\begin{equation}\label{C3}
d(T_i(x),q)\le \mu d(x,q)+M \le \mu \frac{M}{1-\mu}+M = \frac{M}{1-\mu}.
\end{equation}
\end{proof}

\begin{remark}\label{remarkC}
Under the conditions of Lemma \ref{lemma3}, any ball $B(q,R)$ with $R>\frac{M}{1-\mu}$ is also an invariant set of
$\{T_i\}_{i \in \mathbb{N}}$. This follows since $M$ in (\ref{C2}) can be replaced by any $M^*>M$.
\end{remark}

\begin{example}\label{Ex1}
Consider a sequence of affine transformations on $\R^m$ of the form
\begin{equation}\label{Ex1eq}
T_i(x)=A_ix+b_i,\ \ i\in \mathbb{N},
\end{equation}
where $\{A_i\}$ are $m\times m$ matrices with $\|A_i\|_2\le \mu<1$, and $\|b_i\|_2\le M$.
Then the conditions of Lemma \ref{lemma3} are satisfied with $q=0$, and thus $C=B\big(0, \frac{M}{1-\mu}\big)$ is
an invariant set of $\{T_i\}_{i \in \mathbb{N}}$.
\end{example}

%Assumptions (1) and (3) above are also motivated by our applications.
%The interesting questions concern the convergence of both types of trajectories, with relation to the fixed point of $T$.

\begin{proposition}\label{forwardconvergence}{\bf Convergence of forward trajectories}\\
Let $\{T_i\}_{i \in \mathbb{N}}$ be a sequence of transformations on $X$, with a compact invariant set $C$,
and assume $\{T_i\}_{i \in \mathbb{N}}$ converges uniformly on $C$ to a Lipschitz map $T$ with Lipschitz constant $\mu<1$.
Then for any $x\in C$ the trajectory $\{\Phi_i(x)\}_{i \in \mathbb{N}}$
converges to the fixed-point $p$ of $T$, namely,
\begin{equation}
\lim_{k\to\infty}d(\Phi_k(x),p)=0.
\end{equation}
\end{proposition}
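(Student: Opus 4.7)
The plan is to exploit Banach's contraction principle for the limit map $T$ and then control the error incurred at each step by the discrepancy $T_k - T$, which becomes small by uniform convergence.

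First I would verify that $T$ actually has a fixed point in $C$. Because every $T_i$ sends $C$ into $C$ and $T_k \to T$ uniformly on $C$, we have $T(C)\subseteq \overline{C}=C$ (using compactness, hence closedness, of $C$). Since $T$ restricted to $C$ is a contraction with constant $\mu<1$ on the complete metric space $C$, the Banach contraction principle produces a unique fixed point $p\in C$. Note also that the invariance of $C$ together with $x\in C$ forces $\Phi_k(x)\in C$ for every $k$, so all subsequent Lipschitz estimates of $T$ will be valid on the relevant points.

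Next I would set up the core recursion. Define the uniform error
\begin{equation*}
\epsilon_k \;=\; \sup_{y\in C} d\bigl(T_k(y),\,T(y)\bigr),
\end{equation*}
which tends to $0$ by the uniform convergence hypothesis. Using $\Phi_k(x)=T_k(\Phi_{k-1}(x))$, $p=T(p)$, the triangle inequality with intermediate point $T(\Phi_{k-1}(x))$, and the Lipschitz bound on $T$, one gets
\begin{equation*}
d\bigl(\Phi_k(x),p\bigr) \;\le\; d\bigl(T_k(\Phi_{k-1}(x)),\,T(\Phi_{k-1}(x))\bigr) + d\bigl(T(\Phi_{k-1}(x)),\,T(p)\bigr) \;\le\; \epsilon_k + \mu\, d\bigl(\Phi_{k-1}(x),p\bigr).
\end{equation*}
Iterating this linear recursion yields
\begin{equation*}
d\bigl(\Phi_k(x),p\bigr) \;\le\; \mu^k\, d(x,p) + \sum_{j=1}^{k}\mu^{\,k-j}\,\epsilon_j .
\end{equation*}

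The first term clearly tends to $0$ since $\mu<1$ and $d(x,p)$ is finite (and in fact bounded by the diameter of the compact set $C$). The step I expect to be slightly subtle, although standard, is showing the convolution sum $\sum_{j=1}^{k}\mu^{\,k-j}\epsilon_j$ vanishes in the limit: given $\delta>0$, pick $N$ large enough that $\epsilon_j<\delta(1-\mu)/2$ for all $j>N$, split the sum at $N$, bound the tail by $\delta/2$ via a geometric series, and bound the head by $\mu^{k-N}\sum_{j=1}^N \mu^{N-j}\epsilon_j$, which is less than $\delta/2$ once $k$ is sufficiently large because $\mu^{k-N}\to 0$. Combining the two vanishing terms gives $d(\Phi_k(x),p)\to 0$, which is the claim. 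No other hypothesis beyond uniform convergence on $C$, contractivity of the limit, and invariance of $C$ is needed.
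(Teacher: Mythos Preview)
Your proof is correct and uses the same ingredients as the paper's: the uniform errors $\epsilon_i=\sup_{y\in C}d(T_i(y),T(y))$, the contraction constant $\mu$ of $T$, and the invariance of $C$ to keep all iterates where the estimates apply. The organization, however, is different. The paper does not set up a one-step recursion on $d(\Phi_k(x),p)$; instead it fixes two indices $k$ and $m$, telescopes to bound
\[
d\bigl(\Phi_{k+m}(x),\,T^m\Phi_k(x)\bigr)\;\le\;\epsilon_{k+m}+\mu\,\epsilon_{k+m-1}+\cdots+\mu^{m-1}\epsilon_{k+1}\;\le\;\frac{1}{1-\mu}\max_{1\le j\le m}\epsilon_{k+j},
\]
and then uses $d(\Phi_{k+m}(x),p)\le d(\Phi_{k+m}(x),T^m\Phi_k(x))+d(T^m\Phi_k(x),p)$, first taking $k$ large to control the $\epsilon$-term and then $m$ large to send $T^m\Phi_k(x)\to p$. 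Your direct recursion $d(\Phi_k(x),p)\le\epsilon_k+\mu\,d(\Phi_{k-1}(x),p)$ unwinds to the closed form $\mu^k d(x,p)+\sum_{j=1}^k\mu^{k-j}\epsilon_j$, and your split of that convolution sum at an index $N$ is precisely the discrete analogue of the paper's two-parameter $(k,m)$ choice. The advantage of your route is that it gives an explicit quantitative bound in a single inequality; the paper's version makes the ``first $k$, then $m$'' logic slightly more transparent but is otherwise equivalent.
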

\begin{proof}
Denoting $\epsilon_i=\sup_{x\in C}d(T_i(x),T(x))$, $i\in \mathbb{N}$, it follows that
\begin{equation}\label{epsto0}
\lim_{i\to\infty}\epsilon_i=0.
\end{equation}
Since $T$ is a Lipschitz map with Lipschitz constant $\mu<1$, the fixed-point iterations $\{T^k(x)\}$ converge to a
unique fixed-point $p\in X$ for any starting point $x$. It also follows that $C$ is an invariant set of $T$.
Starting with $x\in C$, we have that
$\{\Phi_k(x)\}\subseteq C$. Using the triangle inequality in $\{X,d\}$ and the Lipschitz property of $T$, we have
\begin{equation}
\begin{aligned}
d(\Phi_{k+m}(x),T^m\Phi_k(x))=
d(T_{k+m}\circ T_{k+m-1}\circ ...\circ T_{k+1}\circ \Phi_k(x),T^m\Phi_k(x))\le \\
d(T_{k+m}\circ T_{k+m-1}\circ ...\circ T_{k+1}\circ \Phi_k(x),T\circ T_{k+m-1}\circ ...\circ T_{k+1}\circ \Phi_k(x))+ \\
d(T\circ T_{k+m-1}\circ ...\circ T_{k+1}\circ \Phi_k(x),T^2\circ T_{k+m-2}\circ ...\circ T_{k+1}\circ \Phi_k(x))+ \\
... \\
+d(T^{m-1}\circ T_{k+1}\circ \Phi_k(x),T^m\Phi_k(x))\le \\
\epsilon_{k+m} + \mu \epsilon_{k+m-1}+ \mu^2\epsilon_{k+m-2}+...+ \mu^{m-1}\epsilon_{k+1}\le \\ %\sum_{i=1}^m \epsilon_{k+i}.
\max_{1\le i\le m}\{\epsilon_{k+i}\}\times {\frac {1} {1-\mu}}.
\end{aligned}
\end{equation}
Now we use the relation
\begin{equation}
d(\Phi_{k+m}(x),p)\le d(\Phi_{k+m}(x),T^m\Phi_k(x))+d(T^m\Phi_k(x),p).
\end{equation}
The result follows by observing that for $k$ large enough $\max_{1\le i\le m}\{\epsilon_{k+i}\}$ can be made as small as needed
(by (\ref{epsto0})), and for that $k$, for a large enough $m$, $d(T^m\Phi_k(x),p)$ is as small as needed.
\end{proof}

In Section \ref{IFS} we consider trajectories of transformations $\{T_i\}$ defined by function systems, and we look for the
attractors of such trajectories. We refer to such systems as non-stationary function systems, and we apply them to generate new fractals.
Proposition \ref{forwardconvergence} implies that in the case of forward trajectories, if $T_i\to T$ as $i\to \infty$, the limit of the forward trajectories is the attractor of the IFS corresponding to the limit function system, and hence not new. Let us now examine the
backward trajectories $\{\Psi_k(x)\}$, and establish conditions for their convergence.

\begin{proposition}\label{BTp2}{\bf Convergence of backward trajectories}
Let $\{T_i\}_{i \in \mathbb{N}}$ be a sequence of transformations on $X$, with a compact invariant set $C$,
and assume each $T_i$ is a Lipschitz map with Lipschitz
constant $s_i$. If $\sum_{k=1}^\infty \prod_{i=1}^k s_i <\infty$, then the backward trajectories
$\{\Psi_k(x)\}$, with
$ \Psi_k=T_1 \circ T_2 \circ \dots \circ T_k,\ \ k \in \mathbb{N},$ converge for any starting point $x\in C$ to a unique limit in $C$.
\end{proposition}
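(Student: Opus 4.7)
The plan is to exhibit $\{\Psi_k(x)\}$ as a Cauchy sequence by telescoping, using the key algebraic identity $\Psi_{k+1}(x)=\Psi_k(T_{k+1}(x))$, and then invoke completeness of $X$ together with closedness of $C$ to get a limit in $C$; uniqueness (independence of the starting point) will fall out of Proposition \ref{Equivalence}.

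First I would fix $x\in C$ and estimate consecutive distances. Since $T_j(C)\subseteq C$ for all $j$ by invariance, the point $T_{k+1}(x)$ lies in $C$, so
\begin{equation*}
d\bigl(\Psi_{k+1}(x),\Psi_k(x)\bigr)=d\bigl(\Psi_k(T_{k+1}(x)),\Psi_k(x)\bigr)\le \Bigl(\prod_{i=1}^k s_i\Bigr)\, d\bigl(T_{k+1}(x),x\bigr),
\end{equation*}
where the last inequality reuses the Lipschitz chaining argument from (\ref{prodsi}) applied to $\Psi_k$. Since $C$ is compact, it has finite diameter $D:=\sup_{u,v\in C}d(u,v)<\infty$, and both $x$ and $T_{k+1}(x)$ lie in $C$, so $d(T_{k+1}(x),x)\le D$.

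Next I would sum these bounds. The hypothesis $\sum_{k=1}^\infty \prod_{i=1}^k s_i<\infty$ gives
\begin{equation*}
\sum_{k=1}^\infty d\bigl(\Psi_{k+1}(x),\Psi_k(x)\bigr)\le D\sum_{k=1}^\infty \prod_{i=1}^k s_i <\infty,
\end{equation*}
so $\{\Psi_k(x)\}$ is Cauchy in the complete space $(X,d)$ and thus converges to some $p(x)\in X$. Because $C$ is compact, hence closed, and the whole tail $\{\Psi_k(x)\}_{k\ge 1}\subseteq C$, the limit $p(x)$ belongs to $C$.

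Finally I would address uniqueness of the limit (independence of $x$). The summability hypothesis implies $\prod_{i=1}^k s_i\to 0$, so Proposition \ref{Equivalence} gives $\{\Psi_k(x)\}\sim\{\Psi_k(y)\}$ for any $x,y\in C$; since each trajectory already converges, the two limits must coincide. The main delicate step is the telescoping estimate for $d(\Psi_{k+1}(x),\Psi_k(x))$: one must use the identity $\Psi_{k+1}=\Psi_k\circ T_{k+1}$ (rather than the forward identity $\Phi_{k+1}=T_{k+1}\circ\Phi_k$), because only in the backward case does applying the perturbation first and then the long composition produce the shrinking factor $\prod_{i=1}^k s_i$, which is exactly the quantity controlled by the hypothesis.
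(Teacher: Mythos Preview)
Your proof is correct and follows essentially the same approach as the paper: both use the identity $\Psi_{k+1}=\Psi_k\circ T_{k+1}$ together with the Lipschitz chaining bound (\ref{prodsi}) and the diameter of the compact invariant set $C$ to show $\{\Psi_k(x)\}$ is Cauchy, then invoke completeness of $(X,d)$ and Proposition~\ref{Equivalence} for uniqueness. Your explicit remark that the limit lies in $C$ because $C$ is closed and contains the trajectory is a small clarification the paper leaves implicit.
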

\begin{proof}

By (\ref{PhiPsi}) and the relation in (\ref{prodsi})
\begin{equation*}
\begin{split}
d\big(\Psi_{k+1}(x),\Psi_k(x) \big) = &~ d \big( \Psi_k (T_{k+1}(x)), \Psi_k(x) \big) \\
\le &~ \big(\prod_{i=1}^k s_i \big)d\big(T_{k+1}(x),x \big).\\
\end{split}
\end{equation*}
For $m,k \in \mathbb{N}$, $m>k$, we obtain
\begin{equation}\label{e1}
\begin{split}
d\big(\Psi_m(x),\Psi_k(x)\big) \le &~ d\big(\Psi_m(x),\Psi_{m-1}(x)\big) + \dots + d\big(\Psi_{k+2}(x),\Psi_{k+1}(x) \big)+d\big(\Psi_{k+1}(x),\Psi_k(x) \big)\\
\le &~ \big(\prod_{i=1}^{m-1}s_i\big) d\big(T_m(x),x\big)+\dots + \big(\prod_{i=1}^{k+1}s_i\big) d\big(T_{k+2}(x),x \big)+ \big(\prod_{i=1}^{k}s_i\big) d\big(T_{k+1}(x), x \big).
\end{split}
\end{equation}
For $i\in\mathbb{N}$, $T_i(x)\in C$ $\forall x\in C$, which implies that $d(T_i(x),x)\le M$ $\forall x\in C$, where $M$ is the diameter of $C$. Since $\sum_{k=1}^\infty \prod_{i=1}^k s_i < \infty$,  Eq. (\ref{e1}) asserts that
$d\big(\Psi_m(x),\Psi_k(x)\big) \to 0$ as $ k \to \infty$. That is, $\{\Psi_k(x) \}_{k \in \mathbb{N}}\subseteq C$ is a Cauchy sequence, and
due to the completeness of $\{X,d\}$, it is convergent $\forall x\in C$.
The uniqueness of the limit is derived by the equivalence of all trajectories as proved in Proposition \ref{Equivalence}.
\end{proof}

\begin{remark}\label{remark2}
In view of (\ref{PhiPsi},
the result of Proposition \ref{BTp2} holds under the milder assumption that $C$ is an invariant set of $\{T_i\}_{i\ge I}$, for some $I\in\mathbb{N}$.
\end{remark}

\begin{remark}\label{remark3}
{\bf Differences between forward and backward trajectories}
\begin{enumerate}
\item Note that if $T_i\to T$ and $T$ has Lipschitz constant $\mu<1$, then
$$\sum_{k=1}^\infty \prod_{i=1}^k s_i < \infty ,$$
and both the forward and the backward trajectories converge.
\item The condition $\lim_{k\to\infty}\prod_{i=1}^k s_i=0$ is sufficient for the asymptotic similarity result of both forward and backward trajectories. Under the stronger condition
$\sum_{k=1}^\infty \prod_{i=1}^k s_i < \infty$ and the existence of a compact invariant set, we get convergence for the backward trajectories.
\item In many cases, the backward trajectories converge, while the forward trajectories do not converge. To demonstrate this
let the metric space be $\mathbb{R}$ with $d(x,y)=|x-y|$, and
let us consider the simple sequence of contractive
transformations $T_{2i-1}(x)=x/2$, $T_{2i}=x/2+c$, $i\ge 1$. The backward trajectories converge to the fixed point of $S_1=T_1\circ T_2$, which is
$2c/3$. The forward trajectories have two accumulation points, which are the fixed point of $S_1$, i.e., $2c/3$, and the fixed point of
$S_2=T_2\circ T_1$, which is $4c/3$.
\end{enumerate}
\end{remark}

%*****************************************************************************************************************************

\section{\bf Trajectories of Sequences of Function Systems}\label{IFS}

Generalizing the classical IFS we consider a sequence of function systems, SFS in short,
and its trajectories.

\medskip
Let $(X,d)$ be a complete metric space. Consider an SFS $\{\mathcal{F}_i\}_{i\in \mathbb{N}}$ defined by
$$ \mathcal{F}_i = \big\{X; f_{1,i}, f_{2,i}, \dots, f_{n_i,i} \big \},$$
where $f_{r,i}: X \to X$ are continuous maps. The associated set-valued maps are given by
$$\mathcal{F}_i: \mathbb{H}(X) \to \mathbb{H}(X); \quad \mathcal{F}_i(A)= \cup_{r=1}^{n_i} f_{r,i}(A).$$
Denoting $s_{r,i}=\text{Lip}(f_{r,i})$, for $r=1,2,\dots,n_i$,
we recall that as in (\ref{CLF}), the contraction factors of $\mathcal{F}_i$ in $(\mathbb{H}(X),h)$
is $L_{\mathcal{F}_i}=\max_{r=1,2,\dots, n_i} s_{r,i}\equiv s_i$.
The traditional IFS theory deals with the attractor, namely, the set which is the `fixed-point' of a map $\mathcal{F}$. In this section we consider the trajectories of the SFS maps $\{\mathcal{F}_i\}_{i\in \mathbb{N}}$, which we refer to as forward and backward SFS trajectories
\begin{equation}
\Phi_k(A)=\mathcal{F}_k \circ \mathcal{F}_{k-1} \circ \dots \circ \mathcal{F}_1(A),\ \ \
\Psi_k(A)=\mathcal{F}_1 \circ \mathcal{F}_{2} \circ \dots \circ \mathcal{F}_k(A),\ \ k \in \mathbb{N},
\end{equation}
respectively.

As presented in Section \ref{sect1}, $\mathbb{H}(X)$, endowed with the Hausdorff metric $h$, is a complete metric space
if $(X,d)$ is complete.

The first observation is a corollary of Proposition \ref{Equivalence}:

\begin{corollary}\label{IFSequiv}{\bf Asymptotic similarity of SFS trajectories}\\
Consider an SFS defined by $\mathcal{F}_i=\big\{X; f_{1,i}, f_{2,i}, \dots, f_{n_i,i} \big \}$, $i \in \mathbb{N}$,
where $f_{r,i}: X \to X$ are Lipschitz maps. Further assume that the corresponding contraction factors $\{L_{\mathcal{F}_i}\}$ for the set-valued maps $\{\mathcal{F}_i\}$ on $(\mathbb{H}(X),h)$ satisfy $\lim_{ k \to \infty} \prod_{i=1}^k L_{\mathcal{F}_i} =0$.
Then all the forward trajectories of $\{\mathcal{F}_i\}$ are asymptotically similar, and all the backward trajectories of $\{\mathcal{F}_i\}$ are asymptotically similar.
%Then for both the forward and the backward trajectories of $\{\mathcal{F}_i\}$ in $\{\mathbb{H}(X),h\}$, all %trajectories are asymptotically similar.
\end{corollary}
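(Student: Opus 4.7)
The plan is to lift Proposition \ref{Equivalence} from the base space $(X,d)$ to the hyperspace $(\mathbb{H}(X),h)$. The corollary is really just Proposition \ref{Equivalence} applied to the sequence of Barnsley--Hutchinson set-valued operators $\{\mathcal{F}_i\}_{i\in\mathbb{N}}$ regarded as transformations on the complete metric space $\mathbb{H}(X)$. So the forward and backward trajectories $\Phi_k(A)$ and $\Psi_k(A)$ in the statement are literally the forward and backward trajectories in the sense of \eqref{PhiPsi}, just with $X$ replaced by $\mathbb{H}(X)$ and $T_i$ replaced by $\mathcal{F}_i$.

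The only nontrivial step is to certify that $\mathcal{F}_i:\mathbb{H}(X)\to\mathbb{H}(X)$ is a Lipschitz map with Lipschitz constant (bounded by) $L_{\mathcal{F}_i}=\max_r s_{r,i}$. I would establish this from two standard ingredients: first, if $f:X\to X$ is Lipschitz with constant $s$, then $h(f(B),f(C))\le s\,h(B,C)$ for $B,C\in\mathbb{H}(X)$, directly from the definition of $h$; second, for any finite families $\{B_r\}$ and $\{C_r\}$ in $\mathbb{H}(X)$,
$$h\bigl(\textstyle\bigcup_{r} B_r,\bigcup_{r} C_r\bigr)\le \max_{r} h(B_r,C_r).$$
Combining these with $\mathcal{F}_i(A)=\bigcup_{r=1}^{n_i}f_{r,i}(A)$ yields $h(\mathcal{F}_i(B),\mathcal{F}_i(C))\le L_{\mathcal{F}_i}\,h(B,C)$, so $\mathcal{F}_i$ is indeed Lipschitz on $(\mathbb{H}(X),h)$ with constant at most $L_{\mathcal{F}_i}$.

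Once this is in hand, Proposition \ref{Equivalence} applies verbatim with $T_i\leftarrow\mathcal{F}_i$, $s_i\leftarrow L_{\mathcal{F}_i}$, and the underlying complete metric space $(X,d)\leftarrow(\mathbb{H}(X),h)$. The hypothesis $\lim_{k\to\infty}\prod_{i=1}^{k}L_{\mathcal{F}_i}=0$ matches the required $\lim_{k\to\infty}\prod_{i=1}^{k}s_i=0$, and the conclusion of Proposition \ref{Equivalence} gives
$$\{\Phi_k(A)\}\sim\{\Phi_k(B)\}\quad\text{and}\quad\{\Psi_k(A)\}\sim\{\Psi_k(B)\}$$
in $(\mathbb{H}(X),h)$ for any $A,B\in\mathbb{H}(X)$, which is exactly the assertion of the corollary.

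There is no real obstacle: the proof is a one-line invocation modulo the standard Lipschitz estimate for the Barnsley--Hutchinson operator. The only place one must take a little care is in justifying that union-of-Lipschitz-images behaves well under the Hausdorff metric; everything else is a direct translation from $X$ to $\mathbb{H}(X)$.
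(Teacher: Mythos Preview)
Your proposal is correct and matches the paper's approach exactly: the paper states this result simply as ``a corollary of Proposition~\ref{Equivalence},'' relying on the already-recorded fact (see \eqref{CLF} and the paragraph preceding the corollary) that each $\mathcal{F}_i$ is Lipschitz on $(\mathbb{H}(X),h)$ with constant $L_{\mathcal{F}_i}$. Your explicit verification of that Lipschitz bound via the union estimate for the Hausdorff metric just spells out what the paper takes for granted.
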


The next result is a corollary of Proposition \ref{forwardconvergence}:
\begin{corollary}{\bf Convergence of forward SFS trajectories}\label{SFSforward}\\
Let $\{\mathcal{F}_i\}_{i\in \mathbb{N}}$ be as in Corollary \ref{IFSequiv},
with equal number of maps, $n_i=n$, and let $\mathcal{F}=\{X; f_r: r=1,2,\dots, n\}$.
Assume  that there exists $ C\subseteq X$, a compact invariant set of $\{f_{r,i}\}$ and
that for each $r=1,2, \dots,n$, the sequence $\{f_{r,i}\}_{i\in \mathbb{N}}$ converges
uniformly to $f_r$ on $C$ as $ i \to \infty$.
Also assume that $\mathcal{F}$
has a contraction factor $L_{\mathcal{F}}<1$ .
Then the forward trajectories
$\{\Phi_k(A)\}$
converge for any initial set $A\subseteq C$ to the unique attractor of $\mathcal{F}$ .
\end{corollary}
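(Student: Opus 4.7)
The plan is to lift the hypotheses from $(X,d)$ to the metric space $(\mathbb{H}(X),h)$ and then invoke Proposition \ref{forwardconvergence} applied to the sequence of Barnsley--Hutchinson operators $\{\mathcal{F}_i\}$ with limit $\mathcal{F}$, whose compact invariant set is $\mathbb{H}(C)=\{A\in\mathbb{H}(X):A\subseteq C\}$. The attractor $A_{\mathcal{F}}\in \mathbb{H}(C)$ of $\mathcal{F}$ will then be identified as the fixed-point whose existence and uniqueness are supplied by Theorem \ref{theoremIFS}.

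First I would verify that $\mathbb{H}(C)$ is a compact invariant subset of $(\mathbb{H}(X),h)$ for every $\mathcal{F}_i$. Compactness of $\mathbb{H}(C)$ inside $(\mathbb{H}(X),h)$ is a standard consequence of $C$ being compact (Blaschke-type selection). Invariance follows directly from the pointwise invariance hypothesis: if $A\in\mathbb{H}(C)$ then each $f_{r,i}(A)\subseteq C$, so $\mathcal{F}_i(A)=\cup_{r=1}^{n}f_{r,i}(A)\in\mathbb{H}(C)$. Second, I would check that each $\mathcal{F}_i$ is Lipschitz on $(\mathbb{H}(X),h)$ with constant $L_{\mathcal{F}_i}$, and likewise $\mathcal{F}$ is a contraction with constant $L_{\mathcal{F}}<1$; this is exactly the content of (\ref{CLF}) and Theorem \ref{theoremIFS}.

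Third, and this is the only nontrivial step, I would show that $\mathcal{F}_i\to\mathcal{F}$ uniformly on $\mathbb{H}(C)$. Using the elementary inequality $h\bigl(\cup_{r=1}^n U_r,\cup_{r=1}^n V_r\bigr)\le \max_{1\le r\le n}h(U_r,V_r)$ together with $h(f_{r,i}(A),f_r(A))\le \sup_{x\in A}d(f_{r,i}(x),f_r(x))$, one obtains
\begin{equation*}
\sup_{A\in\mathbb{H}(C)} h\bigl(\mathcal{F}_i(A),\mathcal{F}(A)\bigr)\ \le\ \max_{1\le r\le n}\,\sup_{x\in C}d\bigl(f_{r,i}(x),f_r(x)\bigr),
\end{equation*}
and the right-hand side tends to zero by the uniform convergence of $f_{r,i}\to f_r$ on $C$ for each $r$.

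With all hypotheses of Proposition \ref{forwardconvergence} in force on the complete metric space $(\mathbb{H}(X),h)$ and invariant set $\mathbb{H}(C)$, it follows that for every $A\in\mathbb{H}(C)$ the forward trajectory $\Phi_k(A)=\mathcal{F}_k\circ\cdots\circ\mathcal{F}_1(A)$ converges in $h$ to the unique fixed-point of $\mathcal{F}$, which is precisely $A_{\mathcal{F}}$. The main obstacle, as anticipated, is the uniform-convergence step; everything else is a routine transfer of compactness, invariance and the contraction property from $X$ to $\mathbb{H}(X)$.
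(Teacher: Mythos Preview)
Your proposal is correct and follows exactly the route the paper intends: the corollary is stated as a direct consequence of Proposition \ref{forwardconvergence}, and the passage from $(X,d)$ to $(\mathbb{H}(X),h)$ via the inequality $h\bigl(\cup_{r} f_{r,i}(A),\cup_{r} f_r(A)\bigr)\le \max_r h(f_{r,i}(A),f_r(A))$ to obtain uniform convergence of $\mathcal{F}_i\to\mathcal{F}$ on $\mathbb{H}(C)$ is precisely the argument the authors have in mind (indeed an earlier draft of the paper contained essentially this computation before it was suppressed in favor of simply calling the result a corollary).
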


\iffalse
\begin{proof}
Define  $\tilde{\mathcal{F}}: \mathbb{H}(X) \to \mathbb{H}(X)$ by
$$\tilde{\mathcal{F}}(A) = \cup_{r=1}^n \tilde{f_r} (A).$$
We have
\begin{equation*}
\begin{split}
h\big(\mathcal{F}_i(A), \tilde{\mathcal{F}}(A)\big)=&~ h\Big(\cup_{r=1}^n  f_{r,i} (A), \cup_{r=1}^n \tilde{f_r} (A)\Big)\\
\le &~ \max_{r=1,2,\dots,n} h \Big(  f_{r,i} (A),  \tilde{f_r} (A) \Big).
\end{split}
\end{equation*}
Since $\{f_{r,i}\}_{i\in \mathbb{N}}$ converges uniformly to $\tilde{f_r}$ on $X$, Lemma \ref{nonstatIFSl1} in conjunction with the previous inequality implies  that  $\{\mathcal{F}_i\}_{i \in \mathbb{N}}$ converges  $\tilde{\mathcal{F}}$ uniformly on $\mathbb{H}(X)$. Now  the conclusion follows exactly as in Proposition \ref{nonstatIFSp2}.
\end{proof}
\fi

\begin{remark}\label{remark4}
The forward trajectories of the SFS in Corollary \ref{SFSforward} converge to the fractal set (attractor) associated with $\mathcal{F}$ (see \cite{B1}). This observation implies that forward
trajectories of a converging SFS do not produce any new entities.
%\end{enumerate}
\end{remark}

Backward trajectories of SFS do not seem natural. However, as they converge under mild conditions, even if the
SFS $\{\mathcal{F}_i\}_{i \in \mathbb{N}}$ does not converge to a contractive function system, their limits, or attractors, may constitute
new entities, different from the known fractals which are self similar.

\begin{corollary}\label{backSFS}{\bf Convergence of backward SFS trajectories}

Let $\{\mathcal{F}_i\}_{i\in \mathbb{N}}$ and $\{L_{\mathcal{F}_i}\}$ be as in Corollary \ref{IFSequiv}.
Assume there exists $ C\subseteq X$, a compact invariant set of $\{f_{r,i}\}$, $r=1,...,n_i$, $i\in \mathbb{N}$,
and assume that $\sum_{k=1}^\infty \prod_{i=1}^k L_{\mathcal{F}_i}<\infty$.
Then the backward trajectories
$\{\Psi_k(A)\}$
converge, for any initial set $A\subseteq C$, to a unique set (attractor) $P\subseteq C$.
%Then the backward trajectories of $\{\mathcal{F}_i\}_{i\in \mathbb{N}}$ converge in $\{\mathbb{H}(X),h\}$ to a unique attractor.
\end{corollary}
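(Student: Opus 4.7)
The plan is to lift the situation to $(\mathbb{H}(X),h)$ and invoke Proposition \ref{BTp2} verbatim, treating the set-valued Barnsley--Hutchinson operators $\mathcal{F}_i$ as transformations $T_i$ on the complete metric space $(\mathbb{H}(X),h)$. Recall from Section \ref{sect1} that $(\mathbb{H}(X),h)$ is complete whenever $(X,d)$ is, so the ambient setting of Proposition \ref{BTp2} is available without further work.

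First I would record that each $\mathcal{F}_i:\mathbb{H}(X)\to \mathbb{H}(X)$ is a Lipschitz map with Lipschitz constant exactly $L_{\mathcal{F}_i}=\max_{r}s_{r,i}$. This is the standard IFS computation: for $B,B'\in \mathbb{H}(X)$,
\begin{equation*}
h\big(\mathcal{F}_i(B),\mathcal{F}_i(B')\big)=h\Big(\cup_{r} f_{r,i}(B),\cup_{r} f_{r,i}(B')\Big)\le \max_{r} h\big(f_{r,i}(B),f_{r,i}(B')\big)\le L_{\mathcal{F}_i}\,h(B,B'),
\end{equation*}
using the sublinearity of $h$ under finite unions and the Lipschitz constants $s_{r,i}$ of the $f_{r,i}$. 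Thus the hypothesis $\sum_{k}\prod_{i=1}^{k}L_{\mathcal{F}_i}<\infty$ of Corollary \ref{backSFS} is precisely the hypothesis of Proposition \ref{BTp2} for the $T_i=\mathcal{F}_i$.

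Next I need a compact invariant set in $(\mathbb{H}(X),h)$ built from the given compact invariant set $C\subseteq X$. The natural candidate is
\begin{equation*}
\widetilde{C}:=\{B\in \mathbb{H}(X):B\subseteq C\}.
\end{equation*}
For any $B\in \widetilde{C}$ and every $r,i$, we have $f_{r,i}(B)\subseteq f_{r,i}(C)\subseteq C$ by the invariance of $C$ for the pointwise maps, hence $\mathcal{F}_i(B)=\cup_{r}f_{r,i}(B)\subseteq C$, so $\mathcal{F}_i(B)\in \widetilde{C}$; thus $\widetilde{C}$ is invariant under each $\mathcal{F}_i$. Compactness of $\widetilde{C}$ in $(\mathbb{H}(X),h)$ follows from the standard fact that the space of closed subsets of a compact metric space, endowed with the Hausdorff metric, is itself compact (a consequence of Blaschke's selection theorem); in particular $\widetilde{C}$ has finite diameter. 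With all hypotheses of Proposition \ref{BTp2} verified, we conclude that for any $A\in \widetilde{C}$ the backward trajectory $\{\Psi_k(A)\}$ is Cauchy in $\widetilde{C}$ and therefore converges to some $P_A\in \widetilde{C}\subseteq \mathbb{H}(X)$.

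Finally, to upgrade this to a unique limit $P$ independent of $A$, I would invoke Corollary \ref{IFSequiv}: the summability hypothesis $\sum_{k}\prod_{i=1}^{k}L_{\mathcal{F}_i}<\infty$ forces $\prod_{i=1}^{k}L_{\mathcal{F}_i}\to 0$, so by Corollary \ref{IFSequiv} the backward trajectories are pairwise asymptotically similar, i.e.\ $h(\Psi_k(A),\Psi_k(A'))\to 0$ for any $A,A'\in \widetilde{C}$, forcing $P_A=P_{A'}=:P$. The only mild obstacle in this argument is the compactness claim for $\widetilde{C}$; everything else is a mechanical translation of Proposition \ref{BTp2} and Corollary \ref{IFSequiv} to the hyperspace.
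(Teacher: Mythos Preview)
Your proposal is correct and is exactly the argument the paper intends: Corollary \ref{backSFS} is stated without proof precisely because it is obtained by applying Proposition \ref{BTp2} (and Proposition \ref{Equivalence}/Corollary \ref{IFSequiv} for uniqueness) in the hyperspace $(\mathbb{H}(X),h)$ with $T_i=\mathcal{F}_i$, and you have spelled out the routine verifications (Lipschitz constant $L_{\mathcal{F}_i}$, the invariant set $\widetilde{C}=\mathbb{H}(C)$, and its compactness via Blaschke) that the paper leaves implicit. Note, incidentally, that the proof of Proposition \ref{BTp2} uses compactness of the invariant set only through its finite diameter and closedness, both of which are immediate for $\widetilde{C}$ even without invoking Blaschke.
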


\section{\bf Hidden fractals}

The fractal defined as the attractor of a single
$\mathcal{F}=\{X; f_r: r=1,2,\dots, n\}$ has the property of self-similarity, i.e., its local shape is unchanged under certain contraction maps.
The entities defined as the attractors of backward trajectories are more flexible. With a proper choice of $\{\mathcal{F}_i\}_{i\in \mathbb{N}}$
one can design different local behaviour under different contraction maps.
%And why the backward trajectories and not the forward trajectories?
Such a design relies on the observation that in a set defined by a sequence of contraction maps
\begin{equation}
\mathcal{G}_k(B)=\mathcal{F}_{1}\circ \mathcal{F}_{2}\circ \mathcal{F}_{3}\circ \cdot \cdot \cdot \circ \mathcal{F}_k(B),
\end{equation}
the first maps
$\mathcal{F}_{1}$,$\mathcal{F}_{2}$,$\mathcal{F}_{3}$,... determine the global shape of the set, while the details of
the local shape is determined by the last maps
$\mathcal{F}_{k}$,$\mathcal{F}_{k-1}$,$\mathcal{F}_{k-2}$,....
To understand this note, e.g., that the set $\mathcal{F}_k(B)$ is
undergoing a sequence of $k-1$ contraction maps. Therefore, its shape is not noticeable at larger scales.
The arrangement of the set $\mathcal{G}_k(B)$ is finally fixed by the maps $\{f_{1,1},f_{1,2},...,f_{1,n}\}$ of $\mathcal{F}_{1}$.
In general, if we scale by the contraction
factor of $\Psi_k=\mathcal{F}_1\circ\mathcal{F}_2\circ ...\circ\mathcal{F}_k$, we shall see the behavior of the
attractor of the backward trajectories of $\{\mathcal{F}_i\}_{i>k}$.

\begin{example}
As an example we consider an alternating sequence of maps $\{\mathcal{F}_i\}_{i\in \mathbb{N}}$, where for $10(j-1)<i\le 10j-5$,
$\mathcal{F}_i$ is the function system generating cubic polynomial splines, and for $10j-5<i\le 10j$ it is the function system generating the Koch fractal.
Both function systems are contractive of course. The forward trajectories do not converge (see Remark \ref{remark3}(3)),
while any backward trajectory is rapidly converging.
In Figure \ref{cubicKoch} we see on the left image of the global behavior of the limit which is a cubic spline behavior, and on the right image
the local behavior near $x=0$, which is like the Koch fractal. In higher resolution we have smooth behavior again, and so on.
Note that the scaling factor between the two images in Figure \ref{cubicKoch} is approximately $(1/2)^5$ which is the
contraction factor of the  first five mappings in $\{\mathcal{F}_i\}_{i\in \mathbb{N}}$.

%\iffalse
\begin{figure}[h]
    \includegraphics[width=2.5in]{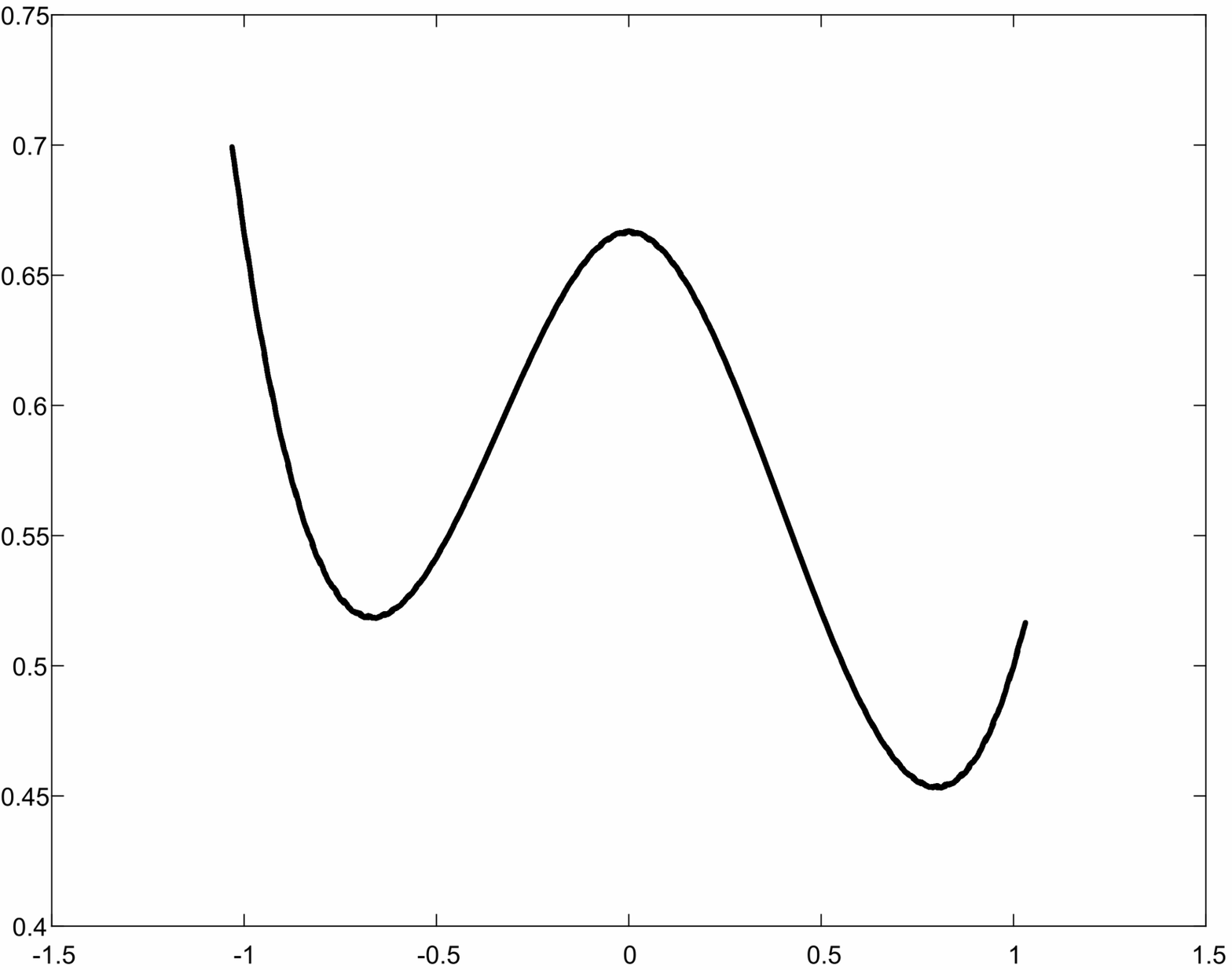}
    \hspace{10px}
    \includegraphics[width=2.5in]{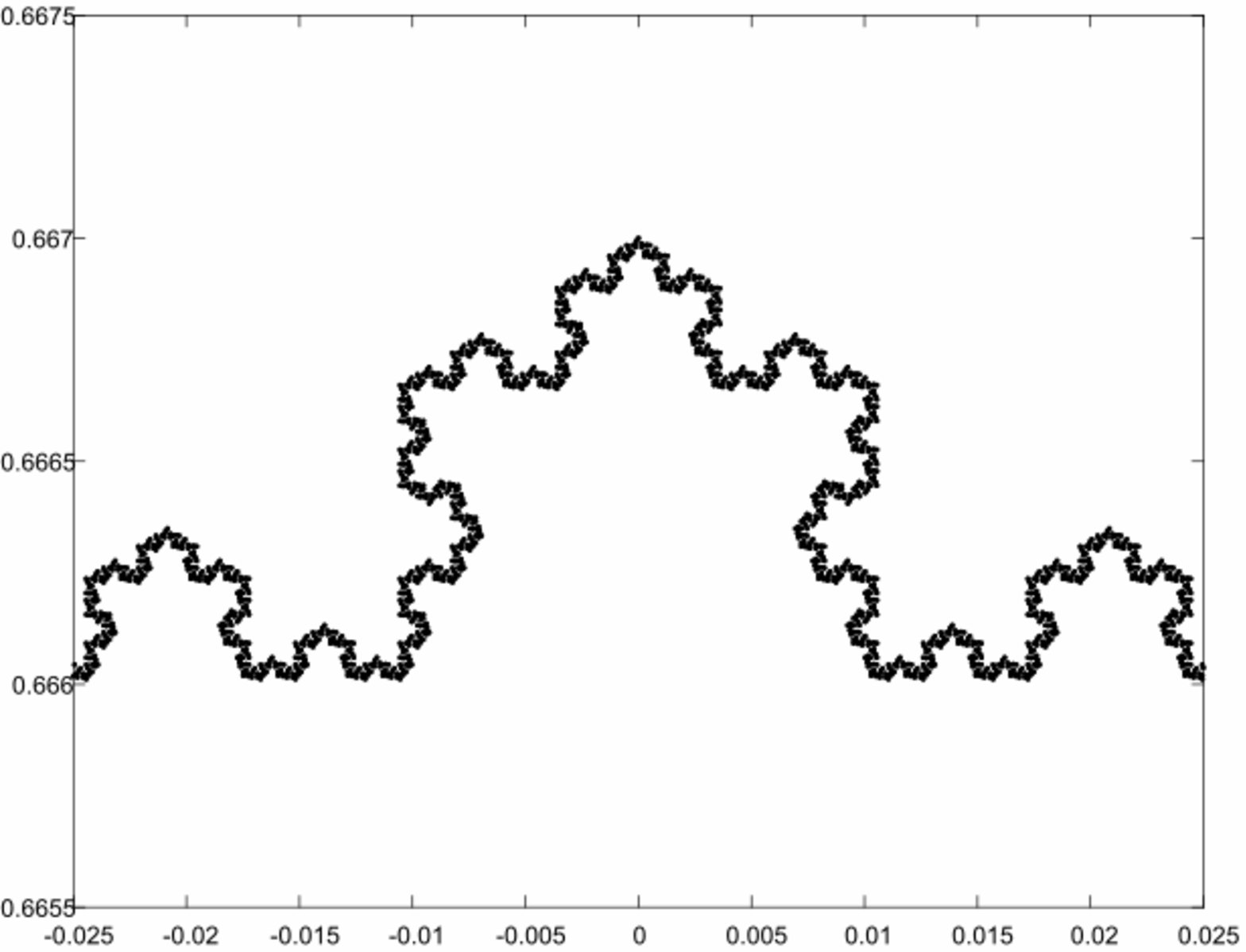}
    \caption{The cubic-Koch attractor: "Smooth" in one scale and "Fractal" in another.}
    \label{cubicKoch}
\end{figure}
%\fi
\end{example}

\section{\bf IFS related to convergent stationary subdivision}\label{6}\hfill

In this section we present IFS systems related to stationary subdivision schemes. The result in Subsections \ref{51}, \ref{52} are taken from \cite{SLG}. As in \cite{SLG} the discussion is
restricted to the case $s=1$, i.e., curves in $\mathbb{R}^m$.

\subsection{\bf $C^0$-convergent subdivision}\label{51}\hfill

\medskip

The connection between a $C^{0}$- convergent stationary subdivision for curves and IFS is presented in \cite{SLG}. In subdivision processes for curves ($s=1$)
one starts with an initial control polygon $p^0$, and the limit curve depends upon $p^0\subset \mathbb{R}^m$.
The attractor of the IFS does not depend upon the initial set. This dichotomy is resolved in \cite{SLG}
by defining an IFS related
to the subdivision operator $S$ which depends
upon $p^0$. The resulting IFS then converges to the relevant subdivision limit from any initial starting set. To understand the extension to non-stationary subdivision, let us first elaborate  the construction suggested in \cite{SLG} for the case of stationary subdivision for curves.

As presented in Section \ref{BOS}, a stationary binary subdivision scheme for curves in the plane ($s=1,\ m=2$) is defined by two refinement rules that take a set of control points
at level $k$, $p^k$, to a refined set at level $k+1$, $p^{k+1}$. For an infinite sequence $p^k$ this operation can be written in matrix form as
\begin{equation}\label{S}
p^{k+1}=Sp^k,
\end{equation}
where $S\equiv S_a$ is a two-slanted infinite martix with rows representing the two refinement rules, namely
$S_{i,j}=a_{i-2j}$, and $p^k$ is a matrix with $m$ columns and an infinite number of rows.
Given a finite set of control points, $\{p^0_j\in\mathbb{R}^m\}_{j=1}^n$ at level $0$, we are interested in computing the limit curve defined by these points. For a non-empty limit curve, $n$ should be larger than the support size $|\sigma(a)|$. We consider the sub-matrix of $S$ which operates on these points, and we cut from it two square $n\times n$ sub-matrices, $S_1$ and $S_2$, which
define all the $n_1$ resulting control points at level $1$. Note that  $S_1$ defines the transformation to the first $n$ points at level $1$, and
$S_2$ defines the transformation to the last $n$ points at level $1$. Of course there can be an overlap between these two vectors of points, namely $n_1< 2n$. Some examples of these sub-matrices are given in \cite{SLG}. We provide below the explicit forms of $S_1$ and $S_2$:

%\bf Explicit forms of $S_1$ an $S_2$}\\
We distinguish two types of masks, an even mask, with $2\ell$ elements, $a_{-\ell+1},...,a_\ell$, and an odd mask with $2\ell+1$ elements, $a_{-\ell},...,a_\ell$. For both cases we assume $n>\ell+1$. For both the even and the odd masks
\begin{equation}\label{S1}
S_1=\{a_{i-2j}\}_{i=\ell+1,\ j=1}^{\ \ \ \ell+n,\ \ \ n}.
\end{equation}
$S_2$ is different for odd and even masks. For an even mask
\begin{equation}\label{S2E}
S_2=\{a_{i-2j}\}_{i=n-\ell+2,\ j=1}^{\ \ \ 2n-\ell+1,\ \ n},
\end{equation}
and for an odd mask
\begin{equation}\label{S2O}
S_2=\{a_{i-2j}\}_{i=n-\ell+3,\ j=1}^{\ \ \ 2n-\ell+2,\ \ n}.
\end{equation}

Repeated applications of $S_1$ and $S_2$, define all the control points at all levels.
Therefore,
\begin{equation}\label{pinfty}
\bigcup_{i_1,i_2,...,i_k\in \{1,2\}}S_{i_k},...,S_{i_2}S_{i_1}p^0 \to p^\infty,\ \ \ as\ \ k \to \infty,
\end{equation}
where $p^\infty$ is the set of points on the curve defined by the subdivision process starting with $p^0$.

\begin{remark}\label{Union}{\bf Union of vectors of points}\\
$p^0$ is a vector of $n$ points in $\mathbb{R}^m$, and thus
each $S_{i_k},...,S_{i_2}S_{i_1}p^0$ is a vector of $n$ points in $\mathbb{R}^m$, which we regard as a set of
$n$ points in $\mathbb{R}^m$ . By
$\bigcup S_{i_k},...,S_{i_2}S_{i_1}p^0$ we mean the set in $\mathbb{R}^m$
which is the union of all these sets.
\end{remark}

\begin{remark}\label{PTP}{\bf Parameterizing the points in $p^\infty$}\\
To order the points of the set $p^\infty$ we introduce the following parametrization.
An infinite sequence $\eta=\{i_k\}_{k=1}^\infty$, $i_k\in\{1,2\}$ defines a vector of $n$ points in $\mathbb{R}^m$
\begin{equation}
\lim_{k\to \infty}S_{i_k},...,S_{i_2}S_{i_1}p^0=(q_1,...,q_n)^t,\ \ q_i\in \mathbb{R}^m.
\end{equation}
In case of a $C^{0}$-convergent subdivision, the differences between adjacent points tend to zero \cite{DGL}.
Therefore, all these $n$ points are the same point,
\begin{equation}\label{samepoint}
\lim_{k\to \infty}S_{i_k},...,S_{i_2}S_{i_1}p^0=(q_\eta,...,q_\eta)^t,\ \ q_\eta\in \mathbb{R}^m.
\end{equation}
We attach this point $q_\eta$ to the parameter value $x_\eta=\sum_{k=1}^\infty (i_k-1)2^{-k}\in [0,1]$.
\end{remark}

\subsection{\bf IFS related to stationary subdivision}\label{52}\hfill

\medskip
Here the metric space is $\{\mathbb{R}^n,d\}$ with $d(x,y)=\|x-y\|_2$, where $\|\cdot\|_2$ is the Euclidean norm.
The observation (\ref{pinfty}) leads in \cite{SLG} to the definition of an IFS with two maps on $X=\mathbb{R}^n$ (row vectors)
\begin{equation}\label{f1f2}
f_r(A)=AP^{-1}S_r P,\ \ \ r=1,2,
\end{equation}
where $P$ is an $n\times n$ matrix defined as follows:

\begin{enumerate}
\item The first $m$ columns of $P$ are the $n$ given control points $p^0$, which are points in $\mathbb{R}^m$.
\item The last column is a column of $1$'s.
\item The rest of the columns are defined so that $P$ is non-singular. We assume here that the control points $p^0$ do not all lie on an $m-1$ hyper plane so that the first $m$ columns of $P$ are linearly independent, and that the column of $1$'s is independent of the first $m$ columns.
\end{enumerate}

%As explained in \cite{SLG},
This special choice of $P$, together with the special definition of $f_1,f_2$ in (\ref{f1f2}), yields the following essential observations:
\begin{itemize}
\item
Since $S_1$ and $S_2$ have eigenvalue $1$, with right eigenvector $(1,1,...,1)^t$ which is also the last column of $P$, then
\begin{equation}\label{PSP}
P^{-1}S_rP
=\left(
\begin{array}{c|c}
G_r & 0 \\
\hline
v&1
\end{array}
\right)
, \ \ r=1,2,
\end{equation}
where $G_r$ are $(n-1)\times(n-1)$ matrices.
Denoting by $Q^{n-1}$ the $n-1$ dimensional hyperplane (flat) of
vectors of the form $(x_1,...,x_{n-1},1)$, it follows from (\ref{PSP}) that $f_r\ :\ Q^{n-1}\to Q^{n-1}$,
$r=1,2$.

%\item
%Since all the other eigenvalues of $S_1$ and $S_2$ are smaller than $1$, the maps $f_1,f_2$ are contractive on $Q^{n-1}$. Thus the IFS defined %by $\mathcal{F}=\{X; f_1,f_2\}$ has a unique attractor in $Q^{n-1}$.

\item
By applying the IFS iterations to the set $A=P$, using equation (\ref{FX}), we identify the candidate attractor as
\begin{equation}\label{Pinfty}
P^\infty=\lim_{k\to\infty}\bigcup_{i_1,i_2,...,i_k\in \{1,2\}}S_{i_k},...,S_{i_2}S_{i_1}P.
\end{equation}
Similarly to Remark \ref{Union}, the rows of $P^\infty$ constitute a set of points in $\mathbb{R}^n$.
By the structure of $P$, and in view of (\ref{pinfty}),  we observe that
$p^\infty$ is the set of points in $\mathbb{R}^m$ defined by the first $m$ components of the points (in $\mathbb{R}^n$) of $P^\infty$.
\end{itemize}

The above observations lead to the main result in \cite{SLG}, stated in the Theorem below. The original proof
in \cite{SLG} of this theorem has a flaw. We provide here a proof which serves us later in the discussion on non-stationary subdivision.

\begin{theorem}\label{ThSLG}
Let $S_a$ be a $C^{0}$-convergent subdivision, and let $p^0$ be a sequence of initial control points. Define the IFS $\mathcal{F}=\{X; f_1,f_2\}$
on $Q^{n-1}$, with $f_1,f_2$ defined in (\ref{f1f2}) and $S_1,S_2$ defined in (\ref{S1})-(\ref{S2O}). Then the IFS converges to a unique attractor in $Q^{n-1}$, and the first $m$ components of the points of this attractor constitute the limit curve $p^\infty=S_a^\infty p^0$.
\end{theorem}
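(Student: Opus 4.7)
My plan is to (i) exploit the block structure of $P^{-1}S_rP$ in (\ref{PSP}) to show that $f_1,f_2$ are affine self-maps of $Q^{n-1}$ whose compositions are contractive after sufficiently many steps, (ii) invoke Remark \ref{remarkIFS}(2) to obtain a unique attractor $P^\infty \subseteq Q^{n-1}$, and (iii) identify this attractor with $p^\infty$ by iterating the IFS starting from the specific set $A_0 = P$.

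For step (i), the key observation is that any two row vectors $A,B \in Q^{n-1}$ have difference $A-B$ with vanishing last coordinate. Right-multiplying by $P^{-1}S_rP$ and using the block form (\ref{PSP}) gives $f_r(A) - f_r(B) = (A-B)P^{-1}S_rP$, whose last coordinate remains $0$ and whose first $n-1$ entries are obtained by right-multiplying the corresponding entries of $A-B$ by $G_r$. In particular each $f_r$ maps $Q^{n-1}$ into itself (justifying the choice of ambient space), and by iteration
\[
\bigl\|f_{i_\ell}\circ\cdots\circ f_{i_1}(A) - f_{i_\ell}\circ\cdots\circ f_{i_1}(B)\bigr\|_2 \le \bigl\|G_{i_1}G_{i_2}\cdots G_{i_\ell}\bigr\|_2\,\|A-B\|_2.
\]
Thus it suffices to find an $\ell$ for which every length-$\ell$ product of $G_1,G_2$ has spectral norm strictly less than one.

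For step (ii), I would appeal to the classical characterization in subdivision theory: $C^0$-convergence of $S_a$ forces the reproduction of constants, so $(1,\dots,1)^t$ is indeed a right eigenvector of every $S_r$ with eigenvalue $1$ (justifying (\ref{PSP})), and is equivalent to the joint spectral radius of the associated difference matrices being strictly less than one. The matrices $G_r$ are precisely the action of $S_r$ on the $(n-1)$-dimensional complement of the all-ones direction expressed in the basis given by the first $n-1$ columns of $P$, so they inherit this joint-contractivity. Hence some $\ell$-fold composition of $f_1,f_2$ is a contraction on $Q^{n-1}$, and Remark \ref{remarkIFS}(2) together with Theorem \ref{theoremIFS} delivers a unique attractor $P^\infty \subseteq Q^{n-1}$.

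For step (iii), initialize the iteration with $A_0 = P$. Since $P\cdot P^{-1}S_rP = S_rP$, an easy induction gives $\mathcal{F}^k(P) = \bigcup_{i_1,\ldots,i_k\in\{1,2\}} S_{i_k}\cdots S_{i_1} P$, viewed as a union of rows in $\mathbb{R}^n$. Because the first $m$ columns of $P$ equal $p^0$, the first $m$ coordinates of the rows of $\mathcal{F}^k(P)$ coincide with the set $\bigcup_{i_1,\ldots,i_k} S_{i_k}\cdots S_{i_1} p^0$, which by (\ref{pinfty}) converges to $p^\infty$ in the Hausdorff metric of $\mathbb{R}^m$. Since the coordinate projection $\pi:\mathbb{R}^n\to\mathbb{R}^m$ onto the first $m$ components is $1$-Lipschitz, Hausdorff convergence $\mathcal{F}^k(P)\to P^\infty$ in $\mathbb{R}^n$ implies $\pi(\mathcal{F}^k(P)) \to \pi(P^\infty)$ in $\mathbb{R}^m$, whence $\pi(P^\infty) = p^\infty$, as required.

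The main obstacle is step (ii): translating the $C^0$-convergence of $S_a$ into a concrete contractivity bound on products of the specific $(n-1)\times(n-1)$ matrices $G_r$ defined here. Although the underlying equivalence is well established in the subdivision literature, the $G_r$ are obtained indirectly through the conjugation (\ref{PSP}) with a matrix $P$ whose columns $m+1,\ldots,n-1$ are essentially arbitrary, so I would carry out the argument by using the invariance of the all-ones direction to factor $S_{i_\ell}\cdots S_{i_1}$ as a block triangular operator and then applying a compactness argument on the unit sphere of the complementary subspace to extract the uniform contractivity bound from the pointwise decay of differences guaranteed by $C^0$-convergence.
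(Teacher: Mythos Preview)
Your overall strategy matches the paper's: reduce to showing that some $\ell$-fold composition of $f_1,f_2$ is a contraction on $Q^{n-1}$, invoke Remark~\ref{remarkIFS}(2), and identify the attractor via the special trajectory starting at $A=P$. Steps~(i) and~(iii) coincide with the paper's argument (the identification in~(iii) is implicit in the discussion around~(\ref{Pinfty})).

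The only substantive difference is in step~(ii). You appeal to the joint-spectral-radius characterization of $C^0$-convergence from the subdivision literature, which is valid but external. The paper instead gives a direct, self-contained argument: by $C^0$-convergence and~(\ref{samepoint}), for any $\epsilon>0$ there is an $\ell$ such that every product $S_{i_\ell}\cdots S_{i_1}P$ has columns uniformly $\epsilon$-close to constant vectors, i.e.\ $S_{i_\ell}\cdots S_{i_1}P=Q_\eta+E_\eta$ with $Q_\eta$ having constant columns and $\|E_\eta\|_\infty<\epsilon$. Since the last column of $P$ is $(1,\dots,1)^t$, one has $P^{-1}Q_\eta$ supported only on its last row, so the upper-left $(n-1)\times(n-1)$ block $G_\eta$ of $P^{-1}S_{i_\ell}\cdots S_{i_1}P$ comes entirely from $P^{-1}E_\eta$ and satisfies $\|G_\eta\|_2\le\epsilon\|P^{-1}\|_2<1$ for $\epsilon$ small. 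Your fallback sketch in the final paragraph is essentially this same idea, stated less explicitly; what the paper's version buys is that it avoids any appeal to joint-spectral-radius theory and works entirely from the definition of $C^0$-convergence, whereas your primary route is shorter if one is willing to cite that characterization.
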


\begin{proof}
Since all the eigenvalues of $S_1$ and $S_2$ which differ from $1$ are smaller than $1$, it follows that $\rho(G_r)<1$, $r=1,2$, where $\rho(G)$ is the spectral radius of $G$. This does not directly imply that the maps $f_1,f_2$ are contractive on $Q^{n-1}$. Following Remark \ref{remarkIFS}(2), to prove convergence of the IFS $\mathcal{F}$, we show that there exists an
$\ell$-term composition of $\mathcal{F}$ is a contraction map.
We notice that such an $\ell$-term composition of $\mathcal{F}$ is itself an IFS, with $2^\ell$ functions of the form
\begin{equation}\label{ftau}
f_\eta(A)=AP^{-1}S_{i_\ell}...S_{i_2}S_{i_1}P,\ \ \ \eta\in I_\ell,
\end{equation}
where $I_\ell=\{\eta=\{i_j\}_{j=1}^\ell$, $i_j\in\{1,2\}\}$.
$S_a$ is $C^{0}$-convergent, thus by Definition \ref{C0conv} it is also uniformly convergent. It follows from (\ref{samepoint})
that for any $\epsilon>0$, there exists 
$ \ell=\ell(\epsilon)$ such that for
any $\eta\in I_\ell$
\begin{equation}\label{stauP}
S_{i_\ell}...S_{i_2}S_{i_1}P=Q_\eta+E_\eta,
\end{equation}
where $Q_\eta$ is an $n\times n$ matrix of constant columns, and $\|E_\eta\|_\infty<\epsilon$. The last column of $Q_\eta$ is
$(1,1,...,1)^t$, and the last column of $E_\eta$ is the zero column.
Recalling that the last column of $P$ is the constant vector of $1$'s, and since $P^{-1}P=I_{n\times n}$, it follows that
\begin{equation}\label{Pm1stauP}
P^{-1}S_{i_\ell}...S_{i_2}S_{i_1}P=
\begin{pmatrix}
0&0&...&0&0\\
0&0&...&0&0\\
. & . & . & . & .\\
. & . & . & . & .\\
 0&0&...&0&0\\
 q_{\eta,1}&q_{\eta,2}&...&q_{\eta,n-1}&1\\
\end{pmatrix}
+P^{-1}E_\eta=
\left(
\begin{array}{c|c}
G_\eta & 0 \\
\hline
q_{\eta}&1
\end{array}
\right),
\end{equation}
Where $q_{\eta_j}(1,1,...,1)^t$ is the $j$-th column of $Q_\eta$.
It follows that $\|G_\eta\|_2\le \epsilon\|P^{-1}\|_2$.
Next we show that for $\epsilon$ small enough,
$f_\eta$ is contractive with respect to the Euclidean norm
in $Q^{n-1}$. Indeed, for $x,y\in \mathbb{R}^{n-1}$, $(x,1),(y,1)\in Q^{n-1}$, and
\begin{equation}\label{fetaxy}
d(f_\eta((x,1)),f_\eta((y,1)))=\|f_\eta((x,1)-(y,1))\|_2=\|f_\eta((x-y,0))\|_2=\|(x-y)^tG_\eta(x-y)\|_2.
\end{equation}
Choosing $\epsilon$ such that $\epsilon \|P^{-1}\|_2<1$, it follows that for all  $ \eta\in I_{\ell(\epsilon)}$, the map $f_\eta$ is contractive on $Q^{n-1}$, and the IFS defined by $\mathcal{F}$ is convergent.
\end{proof}

\begin{remark}
Theorem \ref{ThSLG} reveals the fractal nature of curves generated by subdivision. However, the self-similarity property of these curves is
not achieved in $\mathbb{R}^m$.
The self-similarity property is of $p^\infty$, as a set in $Q^{n-1}$. $p^\infty$ is the projection on $\mathbb{R}^m$
of this self similar entity in $Q^{n-1}$.
\end{remark}

\subsection{\bf A basis for convergent stationary subdivision}\hfill

\medskip

As presented above, and earlier in \cite{SLG}, the definition of an IFS for a $C^0$-convergent stationary subdivision involves the specific
given control points $p^0$. We observe that it is enough to consider one basic IFS, and its attractor can serve as a basis
for generating the limit of the subdivision process for any given $n$ control points $p^0$. Instead of the matrix $P$, we may define
any other non-singular $n\times n$ matrix with a last column of $1$'s. We choose the matrix
\begin{equation}\label{H}
H=
\begin{pmatrix}
1& 0& 0 & 0&...&1\\
0 &1& 0& 0&... &1 \\
0 &0& 1& 0&... &1 \\
 \cdot&\cdot&\cdot&\cdot&\cdot&1\\
 \cdot&\cdot&\cdot&\cdot&\cdot&1\\
 0 &0& 0&... &1&1 \\
0 &0& 0&... &0&1 \\
\end{pmatrix},
\end{equation}
and define the IFS with
\begin{equation}\label{f1f2H}
f_r(A)=AH^{-1}S_r H,\ \ \ r=1,2,
\end{equation}
As shown above, the attractor of this IFS is the union of $n\times n$ matrices
\begin{equation}\label{Hinfty}
\mathcal{H}^\infty=\lim_{k\to\infty}\bigcup_{i_1,i_2,...,i_k\in \{1,2\}}S_{i_k},...,S_{i_2}S_{i_1}H.
\end{equation}
In view of Remark \ref{Union}, $\mathcal{H}^\infty\subset Q^{n-1}$.

For any given control points $p^0$ we can simply calculate $p^\infty$ as the set
\begin{equation}\label{pinftyH}
p^\infty=\mathcal{H}^\infty H^{-1}p^0.
\end{equation}

\section{\bf SFS trajectories associated with non-stationary subdivision}\hfill

This research was motivated by the idea to adapt the framework of the previous section to non-stationary subdivision
processes. In binary non-stationary subdivision, as shown in (\ref{sak}), the
refinement rules may depend upon the refinement level, and can be written in matrix form as
\begin{equation}\label{Sk}
p^{k+1}=S^{[k]}p^k,
\end{equation}
where each $S^{[k]}\equiv S_{a^{[k]}}$ is a ``two-slanted" matrix.
As demonstrated in \cite{DL1}, non-stationary subdivision processes can generate interesting limits which cannot be generated by stationary schemes, e.g., exponential splines. Interpolatory non-stationary subdivision schemes can generate new types of orthogonal wavelets, as shown in \cite{DKLR}.

In the following we discuss the possible relation between non-stationary subdivision processes and SFS processes.
A necessary condition for the convergence (to a continuous limit) of a stationary subdivision scheme is the {\bf constants reproduction property},
namely,
\begin{equation}\label{Seeqe}
 Se=e, \ \ \ \
e=(...,1,1,1,1,1,...)^t .
\end{equation}
As explained in Section \ref{6}, this condition is used in \cite{SLG} in order to show that the maps defined
in (\ref{f1f2}) are contractive on $Q^{n-1}$.
This condition is not necessarily satisfied by converging
non-stationary subdivision schemes. It is also not a necessary condition for the construction of SFS related to non-stationary subdivision.

%Consider a non-stationary scheme $\{S_{a^{[k]}}\}$ with masks $\{a^{[k]}\}$ of a bounded support size, converging to a mask $a$,
%\begin{equation}\label{aktoa}
%\lim_{k\to \infty}a^{[k]}_j=a_j,\ \ \ j\in\sigma(a).
%\end{equation}
%In \cite{DL1} it is shown that the asymptotic speed of convergence in (\ref{aktoa}) is important for proving the convergence of the %non-stationary subdivision. For that, the notion of {\bf asymptotic equivalence} is introduced,
%\begin{equation}\label{aeaktoa}
%S_{a^{[k]}}\sim S_a\ \ \ if\ \ \ \sum_{k=1}^\infty|a^{[k]}_j-a_j|<\infty,\ \ \ j\in\sigma(a).
%\end{equation}
%It is shown that if $S_{a^{[k]}}\sim S_a
%$, and $S_a$ is convergent, then so is $S_{\{a^{[k]}\}}$.

\medskip
\subsection{\bf Constructing SFS mappings for non-stationary subdivision}\hfill

\medskip
In the following we assume that the supports of the masks $a^{[k]}$, $|\sigma(a^{[k]})|$, are of the same size, which is at most the number of initial control points.
As in the stationary case,
for a given set of control points, $\{p^0_j\}_{j=1}^n$, we define for each $k$ the two square $n\times n$ sub-matrices of each $S^{[k]}$, $S^{[k]}_1$ and $S^{[k]}_2$, in the same way as for a stationary scheme, by equations (\ref{S1}), (\ref{S2E}), (\ref{S2O}).
The points generated by the subdivision process are obtained by applying $S^{[1]}_1$ and $S^{[1]}_2$, to the initial control points vector $p^0$, and then applying
$S^{[2]}_1$ and $S^{[2]}_2$ to the two resulting vectors, and so on.
The set of points generated at level $k$ of the subdivision process is given by
\begin{equation}\label{nspk}
p^k=\bigcup_{i_1,i_2,...,i_k\in \{1,2\}}S^{[k]}_{i_k},...,S^{[2]}_{i_2}S^{[1]}_{i_1}p^0\ .
\end{equation}
If the subdivision is $C^0$-convergent or $h$-convergent, then
\begin{equation}\label{pktopinfty}
p^k\to p^\infty\ \ \ as\ \ k\to\infty,
\end{equation}
in the sense of Definitions \ref{C0conv}, \ref{hconv} respectively.
Here $p^\infty$ is the set of points defined by the non-stationary subdivision process starting with $p^0$.

Now we define the SFS $\{\mathcal{F}_k\}$, where
$\mathcal{F}_k=\big\{X; f_{1,k}, f_{2,k} \big \}$,
with the level dependent maps
\begin{equation}\label{f1f2k}
f_{r,k}(A)=AP^{-1}S^{[k]}_r P,\ \ \ r=1,2,
\end{equation}
where $P$ is the $n\times n$ matrix defined as in the stationary case.
%The last column in $P$ is chosen as a column of $1$'s.

\begin{remark}\label{QorR}
If the non-stationary scheme satisfies the constant reproduction property at every subdivision level,
then all the mappings in the SFS map $Q^{n-1}$ into itself (by (\ref{PSP})). If not, then the mappings are considered as maps on $\mathbb{R}^n$.
\end{remark}

Let us now follow a forward trajectory and a backward trajectory of $\Sigma\equiv\{\mathcal{F}_k\}$, starting from $A\subset \mathbb{R}^n$:
$$\mathcal{F}_k(A)=f_{1,k}(A)\cup f_{2,k}(A)=AP^{-1}S^{[k]}_1P\cup AP^{-1}S^{[k]}_2P,$$
and
$$\mathcal{F}_{j}(\mathcal{F}_k(A))=f_{1,j}(AP^{-1}S^{[k]}_1P\cup AP^{-1}S^{[k]}_2P)\cup f_{2,j}(AP^{-1}S^{[k]}_1P\cup AP^{-1}S^{[k]}_2P).$$
We note that
$$f_{r,j}(AP^{-1}S^{[k]}_iP)=AP^{-1}S^{[k]}_iPP^{-1}S^{[j]}_rP=AP^{-1}S^{[k]}_iS^{[j]}_rP.$$
Therefore,
$$\mathcal{F}_{j}(\mathcal{F}_k(A))=\bigcup_{r,i\in\{1,2\}}AP^{-1}S^{[k]}_iS^{[j]}_rP.$$
In the same way it follows that at the $k$th step of a forward trajectory of $\Sigma$ we generate the set
\begin{equation}\label{FASSP}
\mathcal{F}_{k}\circ\mathcal{F}_{k-1}\circ ...\circ\mathcal{F}_{2}\circ\mathcal{F}_1(A)=\bigcup_{i_1,i_2,...,i_k\in \{1,2\}}AP^{-1}S^{[1]}_{i_1},...,S^{[k-1]}_{i_{k-1}}S^{[k]}_{i_k}P.
\end{equation}
Similarly, the set generated at the $k$th step of a backward trajectory is
\begin{equation}\label{ASSP}
\mathcal{F}_{1}\circ\mathcal{F}_2\circ ...\circ\mathcal{F}_{k-1}\circ\mathcal{F}_k(A)=\bigcup_{i_1,i_2,...,i_k\in \{1,2\}}AP^{-1}S^{[k]}_{i_k},...,S^{[2]}_{i_2}S^{[1]}_{i_1}P.
\end{equation}
For the special backward trajectory with $A=P$ we obtain
\begin{equation}\label{PSSP}
\mathcal{F}_{1}\circ\mathcal{F}_2\circ ...\circ\mathcal{F}_{k-1}\circ\mathcal{F}_k(P)=\bigcup_{i_1,i_2,...,i_k\in \{1,2\}}S^{[k]}_{i_k},...,S^{[2]}_{i_2}S^{[1]}_{i_1}P.
\end{equation}

If the non-stationary subdivision scheme is either $C^0$-convergent or $h$-convergent, then,
in view of (\ref{nspk}), it follows that the first $m$ components in this special trajectory converge to the
limit $p^\infty$ of $\{S_{a^{[k]}}\}$, starting with $p^0$. The challenging question is finding for which classes of
non-stationary schemes {\bf all} the backward trajectories converge to the same limit. As we show later, and as explained in Remark \ref{remark4}, forward trajectories of $\Sigma$ are less interesting.

\subsection{\bf Attractors of forward and backward SFS trajectories for non-stationary subdivision}\hfill

\medskip

We consider forward and backward SFS trajectories for several cases of non-stationary subdivision schemes:

\begin{itemize}
  \item[Case (i)] A $C^{0}$-convergent non-stationary scheme $\{S_{a^{[k]}}\}$.
%(without assuming that the masks $\{a^{[k]}\}$ satisfy the constant reproduction property).
  \item[Case (ii)] A non-stationary scheme $\{S_{a^{[k]}}\}$ satisfying the constants reproduction property,
   with masks of the same support, converging to a mask $a$ of a $C^{0}$-convergent subdivision, i.e., $\sigma(a^{[k]})=\sigma(a)$, and
  \begin{equation}\label{aktoa}
\lim_{k\to \infty}a^{[k]}_j=a_j,\ \ \ j\in\sigma(a).
\end{equation}
  \item[Case (iii)]  A non-stationary scheme $\{S_{a^{[k]}}\}$ with masks $\{a^{[k]}\}$ satisfying the constants reproduction property, and corresponding $\{\mathcal{F}_k\}$ satisfying
  $\sum_{\ell=1}^\infty \prod_{k=1}^\ell L_{\mathcal{F}_k}<\infty$.
\end{itemize}
\medskip

%We recall here that as in (\ref{CLF}),
%\begin{equation}\label{CLFk}
%L_{\mathcal{F}_k}=\max_{i=1,2,\dots, n} \text{Lip}(f_{i,k}).
%\end{equation}

In Case (i) we do not assume that
the non-stationary subdivision scheme reproduces constants, nor do we assume that the masks $\{a^{[k]}\}$ converge to a limit mask.
Therefore, the associated SFS maps do not necessarily
map $Q^{n-1}$ to itself. We do assume that the non-stationary scheme is $C^0$-convergent.

\begin{theorem}\label{nsprop2}
Let $\{S_{a^{[k]}}\}$ be a non-stationary $C^0$-convergent subdivision scheme,
and let $\Sigma=\{\mathcal{F}_k\}_{k=1}^\infty$ be the SFS defined in (\ref{f1f2k}).
Then the backward trajectories of $\Sigma$ starting with $A\subset Q^{n-1}$ converge to a unique attractor. The first $m$ components of
the points of this attractor constitute the limit curve (in $\mathbb{R}^m$) of the non-stationary scheme defined
in (\ref{nspk})-(\ref{pktopinfty}).
\end{theorem}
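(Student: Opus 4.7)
The plan is to adapt the argument used in the proof of Theorem \ref{ThSLG} to the non-stationary setting, extracting convergence of the backward SFS trajectories directly from the uniform $C^0$-convergence of the subdivision. The hypotheses of Corollary \ref{backSFS} may fail here, since the individual contraction factors $L_{\mathcal{F}_k}$ need not tend to zero, so I would not invoke it directly. I would begin by noting that each column of $P$ is a legitimate initial control polygon of length $n$, and by applying Definition \ref{C0conv} column-wise: for every $\epsilon>0$ there exists $K=K(\epsilon)$ such that for all $k\ge K(\epsilon)$ and all $\eta=(r_1,\ldots,r_k)\in\{1,2\}^k$,
\[
S^{[k]}_{r_k}\cdots S^{[1]}_{r_1}P=\mathbf{1}\,(\mathbf{c}^k_\eta)^t+E^k_\eta,\qquad \|E^k_\eta\|_\infty<\epsilon,
\]
with $\mathbf{1}=(1,\ldots,1)^t$ and $\mathbf{c}^k_\eta\in\mathbb{R}^n$. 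This is the non-stationary analogue of (\ref{stauP}) and uses only the fact that the $n$ consecutive rows selected by the product become arbitrarily close, uniformly in $\eta$, as $k\to\infty$.

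Since the last column of $P$ equals $\mathbf{1}$, we have $P^{-1}\mathbf{1}=e_n$, and, in parallel with (\ref{Pm1stauP}),
\[
M^k_\eta:=P^{-1}S^{[k]}_{r_k}\cdots S^{[1]}_{r_1}P=e_n(\mathbf{c}^k_\eta)^t+P^{-1}E^k_\eta.
\]
For $a\in A\subset Q^{n-1}$ the last component is $a_n=1$, hence $aM^k_\eta=(\mathbf{c}^k_\eta)^t+aP^{-1}E^k_\eta$. By (\ref{ASSP}) this yields
\[
h\bigl(\Psi_k(A),\mathbf{C}^k\bigr)=O(\epsilon),\qquad \mathbf{C}^k:=\bigl\{(\mathbf{c}^k_\eta)^t:\eta\in\{1,2\}^k\bigr\},
\]
uniformly over any bounded $A\subset Q^{n-1}$ once $k\ge K(\epsilon)$. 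Two consequences follow: first, for any two bounded $A_1,A_2\subset Q^{n-1}$, $h(\Psi_k(A_1),\Psi_k(A_2))\to 0$, yielding asymptotic similarity and hence uniqueness of any limit; second, the convergence of $\Psi_k(A)$ reduces to the convergence of $\mathbf{C}^k$ in the Hausdorff sense.

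To close the argument I would analyze $\mathbf{C}^k$ directly. By construction $(\mathbf{c}^k_\eta)^t$ is, up to $O(\epsilon)$, the common value of the $n$ consecutive entries in the level-$k$ non-stationary subdivision applied independently to each of the $n$ columns of $P$; as $\eta$ ranges over $\{1,2\}^k$, the set $\mathbf{C}^k$ exhausts, up to that error, the full level-$k$ polygon computed columnwise. Column-by-column $C^0$-convergence then delivers a compact Hausdorff limit $\mathbf{C}^\infty\subset\mathbb{R}^n$, and, combined with the previous paragraph, $\Psi_k(A)\to\mathbf{C}^\infty$ for every $A\subset Q^{n-1}$. Because the first $m$ columns of $P$ are precisely the control points $p^0$, the projection of $\mathbf{C}^\infty$ onto those first $m$ coordinates coincides with the limit curve $p^\infty$ of (\ref{pktopinfty}). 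The step I expect to be most delicate is this last identification: even without constants reproduction, one must show that the dyadic parametrization of $\eta$-blocks (as in Remark \ref{PTP}) remains consistent across refinement levels, so that $\mathbf{C}^k$ genuinely converges as an indexed family and the boundary overlaps between adjacent blocks do not spoil the Hausdorff convergence.
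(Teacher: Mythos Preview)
Your proposal is correct and follows essentially the same route as the paper: both arguments exploit the decomposition $S^{[k]}_{i_k}\cdots S^{[1]}_{i_1}P=\mathbf{1}\,c^t+E$ (with $E$ small by $C^0$-convergence), the identity $P^{-1}\mathbf{1}=e_n$, and the fact that every $a\in Q^{n-1}$ has $a_n=1$, to conclude that $aP^{-1}(\mathbf{1}\,c^t)=c^t$ independently of $a$. The only organizational difference is that you work with finite-level error estimates and Hausdorff distances throughout, whereas the paper passes to the limit along each infinite index sequence $\eta$ first and then unions; your version handles the Hausdorff convergence a bit more explicitly than the paper's final ``interchange $\lim$ and $\bigcup$'' step.
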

\begin{proof}
Here we consider the SFS as mappings from $\mathbb{R}^n$ to itself.
Since $\{S_{a^{[k]}}\}$ converges, it immediately follows from (\ref{PSSP}) that the backward trajectory of $\Sigma$ initialized with $A=P$ converge. We would like to show that all the backward trajectories of $\Sigma$ initialized with an arbitrary set
of points $A\subset Q^{n-1}$ converge to the same limit.
We recall that the first $m$ columns of $P$ are the control points $p^0$.
Starting the backward trajectory of $\Sigma$ with $A=P$, it follows,
as discussed in Remark \ref{PTP}, that an infinite sequence $\eta=\{i_k\}_{k=1}^\infty$, $i_k\in\{1,2\}$, defines a vector of $n$ equal points in $\mathbb{R}^m$
\begin{equation}
q=\lim_{k\to \infty}S^{[k]}_{i_k},...,S^{[2]}_{i_2}S^{[1]}_{i_1}p^0=(q_\eta,...,q_\eta)^t,\ \ q_\eta=(q_{\eta,1},...,q_{\eta,m}),
\end{equation}
attached to a parameter value $x_\eta=\sum_{k=1}^\infty (i_k-1)2^{-k}$.
Starting the backward trajectory with a general set $A$ in $Q^{n-1}$, and following the same sequence $\sigma$, it follows from (\ref{ASSP}) that the limit is the $n\times m$ matrix $AP^{-1}q$. We recall that the last column of $P$ is a constant vector of $1$'s.
Since each column of $q$ is a constant vector of length $n$, and since $P^{-1}P=I_{n\times n}$, it follows that
\begin{equation}\label{pm1q}
P^{-1}q=
\begin{pmatrix}
0&0&...&0\\
0&0&...&0\\
 & . & & . \\
 & . & & . \\
 0&0&...&0\\
 q_{\eta,1}&q_{\eta,2}&...&q_{\eta,m}\\
\end{pmatrix}.
\end{equation}
For any row vector of the form $r=(r_1,r_2,...,r_{n-1},1)\in Q^{n-1}$, it follows from (\ref{pm1q}) that $rP^{-1}q=q_\eta$.
If $A$ represents a set of $N$ points in $Q^{n-1}$, i.e., the $n$th element in each row of $A$ is $1$, it follows that $AP^{-1}q$ represent $N$ copies of the same point $q_\eta$. That is, for any sequence of indices $\eta$, the limit of the corresponding trajectory is the same for any initial $A\subset Q^{n-1}$, and it is the limit point of the non-stationary subdivision attached to the parameter value $x_\eta$.
Comparing the trajectories displayed in (\ref{ASSP}) and (\ref{PSSP}), it follows that
\begin{equation}\label{limeqlim}
\lim_{k\to\infty}\mathcal{F}_{1}\circ\mathcal{F}_2\circ ...\circ\mathcal{F}_{k-1}\circ\mathcal{F}_k(A)=AP^{-1}
\lim_{k\to\infty}\mathcal{F}_{1}\circ\mathcal{F}_2\circ ...\circ\mathcal{F}_{k-1}\circ\mathcal{F}_k(P).
\end{equation}
Interchanging the order of $\lim_{k\to\infty}$ and $\bigcup_{i_1,i_2,...,i_k\in \{1,2\}}$
we conclude that both trajectories converge to the same limit for any $A\subset Q^{n-1}$.
\end{proof}

In Case (ii) we
consider a non-stationary scheme $\{S_{a^{[k]}}\}$ with masks converging to a mask $a$,
\begin{equation}\label{aktoa2}
\lim_{k\to \infty}a^{[k]}_j=a_j,\ \ \ j\in\sigma(a),
\end{equation}
with $S_a$ a convergent stationary scheme.
Thus
\begin{equation}\label{fikktofi}
\lim_{k\to \infty} f_{r,k}=f_r,\ \ \ r=1,2.
\end{equation}

Following Corollaries \ref{SFSforward} and \ref{backSFS}, we are now ready to discuss the convergence of forward and backward trajectories
of $\Sigma\equiv\{\mathcal{F}_k\}$.

\begin{corollary}\label{nscor1}{\bf Forward trajectories of $\{\mathcal{F}_k\}$:}
Let $\{S_{a^{[k]}}\}$ have the constant reproducing property, with masks $\{a^{[k]}\}$
of the same support size converging to the mask of a $C^{0}$-convergent
subdivision scheme $S_a$.
Then the forward trajectories of the SFS $\{\mathcal{F}_k\}$ defined above converge to the attractor $P^\infty$ of the IFS related to $S_a$.
\end{corollary}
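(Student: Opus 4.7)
The plan is to reduce convergence of the forward SFS trajectory to Proposition~\ref{forwardconvergence}, treating each Barnsley--Hutchinson operator $\mathcal{F}_k$ as a transformation on the complete metric space $(\mathbb{H}(Q^{n-1}), h)$. The immediate obstacle is that Corollary~\ref{SFSforward} is not directly applicable here: the proof of Theorem~\ref{ThSLG} only gives contractivity of some $\ell$-fold composition $\mathcal{F}^\ell$, not of $\mathcal{F}$ itself on $\mathbb{H}(Q^{n-1})$. So the argument must proceed by regrouping the trajectory into blocks of length $\ell$.

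First I would establish that $\mathcal{F}_k\to\mathcal{F}$ uniformly on bounded subsets of $\mathbb{H}(Q^{n-1})$. Because $\sigma(a^{[k]})=\sigma(a)$ is a fixed finite support, the entrywise convergence $a^{[k]}_j\to a_j$ gives $S_r^{[k]}\to S_r$, and hence
$$
h\bigl(f_{r,k}(A),f_r(A)\bigr)\;\le\;\sup_{v\in A}\|v P^{-1}(S_r^{[k]}-S_r)P\|_2\;\le\;\|P^{-1}(S_r^{[k]}-S_r)P\|_2\cdot\sup_{v\in A}\|v\|_2,
$$
which tends to $0$ uniformly for $A$ in any bounded family; the Lipschitz constants $L_{\mathcal{F}_k}$ are also uniformly bounded. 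By Theorem~\ref{ThSLG} I can choose $\ell$ and $\mu\in[0,1)$ so that $\mathcal{F}^\ell$ is a $\mu$-contraction on $\mathbb{H}(Q^{n-1})$ with unique fixed point $P^\infty$, and, defining the blocked sequence $\widetilde{\mathcal{F}}_j:=\mathcal{F}_{j\ell}\circ\mathcal{F}_{j\ell-1}\circ\cdots\circ\mathcal{F}_{(j-1)\ell+1}$, a standard telescoping argument using this uniform convergence together with the Lipschitz bound shows $\widetilde{\mathcal{F}}_j\to\mathcal{F}^\ell$ uniformly on bounded subsets.

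Next I would build a compact $h$-invariant set for $\{\widetilde{\mathcal{F}}_j\}$. Since $\mathcal{F}^\ell$ is a strict $\mu$-contraction fixing $P^\infty$, every closed Hausdorff ball $\overline{B}(P^\infty,R)$ with $R$ large enough is mapped strictly inside itself by $\mathcal{F}^\ell$; by the uniform convergence $\widetilde{\mathcal{F}}_j\to\mathcal{F}^\ell$, for all $j\ge J$ the same ball is preserved by $\widetilde{\mathcal{F}}_j$, and enlarging $R$ and $J$ to absorb the initial set $A$ together with the first $J-1$ images of the blocked orbit yields a compact invariant set $C$ containing the entire blocked orbit. Proposition~\ref{forwardconvergence} applied to $\{\widetilde{\mathcal{F}}_j\}$ on $C$ then gives $\widetilde{\Phi}_j(A)=\Phi_{j\ell}(A)\to P^\infty$ in Hausdorff metric. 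Finally, for $j\ell<k<(j+1)\ell$ one writes $\Phi_k(A)=\mathcal{F}_k\circ\cdots\circ\mathcal{F}_{j\ell+1}(\Phi_{j\ell}(A))$; using $\mathcal{F}^{k-j\ell}(P^\infty)=P^\infty$, uniform closeness of the $\mathcal{F}_{j\ell+i}$ to $\mathcal{F}$, uniform continuity on $C$, and $\Phi_{j\ell}(A)\to P^\infty$, one concludes $\Phi_k(A)\to P^\infty$ along the full sequence. The main technical obstacle is precisely that $\mathcal{F}$ is not Barnsley--Hutchinson-contractive, which forces the blocking construction and the careful setup of a common compact invariant set for the infinite block-sequence.
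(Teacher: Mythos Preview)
Your proposal is correct and follows essentially the same strategy as the paper's proof: block the forward trajectory into $\ell$-fold compositions $\widetilde{\mathcal{F}}_j$ (the paper's $\mathcal{G}_k$), use the convergence $\widetilde{\mathcal{F}}_j\to\mathcal{F}^\ell$ to a contraction together with a compact invariant set, and invoke Proposition~\ref{forwardconvergence}/Corollary~\ref{SFSforward}. The only notable differences are that the paper builds the invariant set directly in $Q^{n-1}$ via the affine structure (Example~\ref{Ex1} and Remark~\ref{remarkC}) rather than as a Hausdorff ball around $P^\infty$, and that the paper does not spell out the final ``unblocking'' step passing from the subsequence $\{\Phi_{j\ell}(A)\}$ to the full sequence $\{\Phi_k(A)\}$, which you handle explicitly.
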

\begin{proof}
Let $\mathcal{F}$ be the IFS related to $S_a$, and let $\{\mathcal{F}_k\}$ be the SFS related to the non-stationary scheme $\{S_{a^{[k]}}\}$.
Following the proof of Theorem \ref{ThSLG}, there exists an $\ell$ such that the $\ell$-term composition of $\mathcal{F}$, namely, $\mathcal{G}=\mathcal{F}\circ\mathcal{F}\circ ... \circ\mathcal{F}$, is a contraction map.
Let
\begin{equation}
\mathcal{G}_k=\mathcal{F}_{k\ell}\circ\mathcal{F}_{k\ell-1}\circ ... \circ\mathcal{F}_{(k-1)\ell+1},\ \ k\ge 1.
\end{equation}
Thus, $\mathcal{G}_k\to \mathcal{G}$ as $k\to\infty$, and $\exists K$ such that the maps
$\{\mathcal{G}_k\}_{k\ge K}$ are contractive.
In order to apply Corollary \ref{SFSforward} we need to show the existence of an invariant set $C$ for the maps $\{\mathcal{G}_k\}$.
Applying Example \ref{Ex1} we derive the existence of an invariant set $C_K$ for the maps $\{\mathcal{G}_k\}_{k\ge K}$. $C_K$ is a ball of radius $r$ in $Q^{n-1}$, centered at $q=(0,0,...,0,1)^t$.
By Remark \ref{remarkC}, any ball of radius $R>r$, centered at $q$, is also an invariant set of $\{\mathcal{G}_k\}_{k\ge K}$.

Using this observation in Corollary \ref{SFSforward}, implies that
all forward trajectories of $\{\mathcal{G}_k\}_{k\ge K}$ converge from any set in $Q^{n-1}$ to the attractor of $\mathcal{G}$.
%, which is $P^\infty$, the attractor of the IFS related to $S_a$.
In particular, for any set $A\in Q^{n-1}$, we can start the forward trajectory of $\{\mathcal{G}_k\}_{k\ge K}$ with the set
\begin{equation}
\mathcal{G}_{K-1}\circ\mathcal{G}_{K-2}\circ ...\circ\mathcal{G}_{2}\circ\mathcal{G}_1(A),
\end{equation}
and conclude that all forward trajectories of $\{\mathcal{G}_k\}_{k\ge 1}$ converge from any point in $Q^{n-1}$ to the attractor of the IFS related to $S_a$.
\end{proof}

\begin{remark}\hfill

\begin{enumerate}
\item
It is important to note that
in case the non-stationary scheme does not reproduce constants, the result in Corollary \ref{nscor1} does not necessarily hold.
To see this it is enough to consider the simple case where $S_i^{[k]}=S_i$, $i=1,2$, for $k\ge 2$, and only $S_1^{[1]}$ and
$S_2^{[1]}$ are different, and the corresponding $S_{a^{[1]}}$ does not reproduce constants. Then, in view of the expression (\ref{FASSP}), the forward trajectory
with $A=P$ converges to $S_1^{[1]}P^\infty\cup S_2^{[1]}P^\infty\ne P^\infty$, where $P^\infty$ is the attractor corresponding to the stationary subdivision with $S_1$ and $S_2$.
\item
The important conclusion from the above corollary is that forward trajectories of an SFS related to a non-stationary subdivision
with masks
converging to the mask of
a $C^0$-convergent subdivision do not produce any new attractors.
%The first $m$ components of its attractor constitute $p^\infty=S_a^\infty p^0$.
On the other hand, the backward trajectories related to such non-stationary subdivision schemes do generate new interesting
curves. See e.g. \cite{DL}.

\item%{\bf Backward trajectories revisited}
Under the conditions of Corollary \ref{nscor1}, it is proved in \cite{CDMM} that the non-stationary subdivision $\{S_{a^{[k]}}\}$ is $C^0$- convergent. Therefore, by Theorem \ref{nsprop2} the backward trajectories of $\Sigma$ starting with $A\subset Q^{n-1}$ converge to a unique attractor. This result follows from Corollary \ref{backSFS} as well.
\end{enumerate}
\end{remark}

%If the non-stationary subdivision scheme is either $C^0$-convergent or $h$-convergent, then,
%in view of (\ref{nspinfty}), it follows that the first $m$ components in this special trajectory converge to limit of $\{S_{a^{[k]}}\}$, starting %with $p^0$. We would like to prove that {\bf all} trajectories converge to the same limit. If we assume that the 'constant reproduction property' %is satisfied at all levels of $\{S_{a^{[k]}}\}$, then, using  Lemma \ref{nsifs}(3) and Corollary \ref{backSFS},
%the convergence of the backward trajectories of $\Sigma$ to a unique limit follows.

In case (iii), the mask of the subdivision schemes $\{S_{a^{[k]}}\}$ do not have
to converge to a mask of a $C^0$-convergent subdivision scheme. We still assume here that the non-stationary scheme reproduces constants, i.e., $(1,1,...,1)^t$ is an eigenvector of $S_1^{[k]}$ and $S_2^{[k]}$ with eigenvalue $1$, for $k\ge 1$. Let us denote by $\mu(S_{a^{[k]}})$ the maximal absolute value of the eigenvalues of $S_1^{[k]}$ and $S_2^{[k]}$ which differ from $1$.

\begin{corollary}\label{nscor5}
Consider a constant reproducing non-stationary scheme $\{S_{a^{[k]}}\}$ and let $\{\mathcal{F}_k\}_{k=1}^\infty$ be the SFS defined by (\ref{f1f2k}).
If $\ \sum_{\ell=1}^\infty \prod_{k=1}^\ell L_{\mathcal{F}_k}<\infty$
then:
\begin{enumerate}
\item All the backward trajectories of $\{\mathcal{F}_k\}$ converge to a unique attractor in $Q^{n-1}$.
\item The first $m$ components of this attractor constitute
the $h$-limit (in $\mathbb{R}^m$) of the scheme applied to the initial control polygon $p^0$.
\end{enumerate}
\end{corollary}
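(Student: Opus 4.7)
The plan is to deduce (1) from Corollary \ref{backSFS} applied to the SFS $\{\mathcal{F}_k\}$ on the affine flat $Q^{n-1}$, and then to obtain (2) by replaying the algebraic identity from the proof of Theorem \ref{nsprop2}. The set-up is almost forced: by the constants reproduction property, $(1,\dots,1)^t$ is a right eigenvector of each $S^{[k]}_r$ with eigenvalue $1$, and since the last column of $P$ is this vector, the block decomposition (\ref{PSP}) applies to each $P^{-1}S^{[k]}_r P$. Hence every $f_{r,k}$ sends $Q^{n-1}$ into $Q^{n-1}$ and acts on the first $n-1$ coordinates by the affine map $x\mapsto xG_{r,k}+v_{r,k}$ with $\|G_{r,k}\|_2=\text{Lip}(f_{r,k})$; in particular $L_{\mathcal{F}_k}=\max_r\|G_{r,k}\|_2$, and the product-summability hypothesis of Corollary \ref{backSFS} is the very one assumed.

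The main obstacle is producing a compact invariant set $C\subset Q^{n-1}$ for $\{f_{r,k}\}$. The hypothesis $\sum\prod L_{\mathcal{F}_k}<\infty$ forces $\prod_{k=1}^\ell L_{\mathcal{F}_k}\to 0$ but does not force $L_{\mathcal{F}_k}<1$ for each $k$, so one cannot simply invoke Example \ref{Ex1} on each map individually. To handle this, I would group the indices into consecutive blocks $[\ell_j+1,\ell_{j+1}]$ chosen so that each block has composite contraction factor $\prod_{k=\ell_j+1}^{\ell_{j+1}}L_{\mathcal{F}_k}<1$; such a grouping exists because the tail products decay to zero. Each block composition, viewed as a set-valued map on $\mathbb{H}(Q^{n-1})$, is then contractive and has the affine form of Example \ref{Ex1}, so applying that example together with Remark \ref{remarkC} on enlarging the radius yields a common invariant ball $C=B(q,R)\cap Q^{n-1}$ with center $q=(0,\dots,0,1)$ for all blocks beyond some index. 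Since Remark \ref{remark2} allows invariance only from a tail onward, Corollary \ref{backSFS} then delivers convergence of every backward trajectory starting in $C$ to a unique attractor $P^\infty\subset Q^{n-1}$, and Corollary \ref{IFSequiv} extends uniqueness to arbitrary starting sets, proving (1).

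For (2), I would replay the identity in the proof of Theorem \ref{nsprop2}. By (\ref{PSSP}), the special trajectory $\Psi_k(P)=\bigcup_\eta S^{[k]}_{i_k}\cdots S^{[1]}_{i_1}P$ converges in $(\mathbb{H}(\mathbb{R}^n),h)$ to $P^\infty$. Since the first $m$ columns of $P$ are the control points $p^0$, the first $m$ components of the row vectors of $\Psi_k(P)$ form exactly the subdivision output $p^k$ of (\ref{nspk}). The projection $\mathbb{R}^n\to\mathbb{R}^m$ onto the first $m$ coordinates is $1$-Lipschitz, so $p^k$ converges in the Hausdorff metric on $\mathbb{R}^m$ to the first-$m$-coordinate projection of $P^\infty$; this simultaneously establishes $h$-convergence of $\{S_{a^{[k]}}\}$ and identifies $p^\infty$ with the desired projection. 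For a general starting set $A\subset Q^{n-1}$, the identity $\lim_k\Psi_k(A)=AP^{-1}\lim_k\Psi_k(P)$, proved exactly as in (\ref{limeqlim}), confirms that all trajectories yield the same attractor. I expect the construction of the tail-invariant ball to be the only delicate point; everything else is a verbatim adaptation of the arguments already in hand.
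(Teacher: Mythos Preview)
Your approach is essentially the paper's: the paper's entire proof is the single sentence ``The proof follows directly from Corollary \ref{backSFS}.'' You invoke the same corollary, but you are more careful than the paper in two respects. First, you explicitly verify via (\ref{PSP}) that the constants reproduction property makes $Q^{n-1}$ invariant under each $f_{r,k}$, which the paper takes for granted (cf.\ Remark \ref{QorR}). Second, and more substantively, you recognize that Corollary \ref{backSFS} requires a compact invariant set and that the hypothesis $\sum\prod L_{\mathcal{F}_k}<\infty$ does not by itself furnish one at the level of individual maps; your blocking construction to manufacture a tail-invariant ball is exactly the device the paper itself uses later, in the discussion of Example 7.2, but omits in the proof of the corollary. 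Your treatment of part (2) via the Lipschitz projection onto the first $m$ coordinates and the identity (\ref{limeqlim}) is also a clean way to supply what the paper leaves implicit. In short, you and the paper take the same route, but you pave the parts the paper merely gestures at.
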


The proof follows directly from Corollary \ref{backSFS}.

%\iffalse
\subsection{Numerical Examples}\hfill

\medskip
\begin{example}(Case (i) and case (ii))
For our first example we consider a non-stationary subdivision which produces exponential splines.
It is convenient to view the mask coefficients $\{a_i\}$ of a subdivision scheme as the coefficients of a Laurent polynomial
$$a(z)=\sum_ia_iz^i.$$
The subdivision mask for generating cubic polynomial splines is
$$a(z)={(1+z)^4\over 8}={1\over 8}+{1\over 2}z+{3\over 4}z^2+{1\over 2}z^3+{1\over 8}z^4.$$
Following \cite{SLG}, the corresponding matrices $P$, $S_1$ and $S_2$, for $n=5$, are
\medskip
$$
P=
\begin{pmatrix}
x_1&y_1&1&0&1\\
x_2&y_2&0&1&1\\
x_3&y_3&0&0&1\\
x_4&y_4&0&0&1\\
x_5&y_5&0&0&1\\
\end{pmatrix},
\ \ \
S_1=
\begin{pmatrix}
{1\over 2}&{1\over 2}&0&0&0\\
{1\over 8}&{3\over 4}&{1\over 8}&0&0\\
0&{1\over 2}&{1\over 2}&0&0\\
0&{1\over 8}&{3\over 4}&{1\over 8}&0\\
0&0&{1\over 2}&{1\over 2}&0\\
\end{pmatrix},
\ \ \
S_2=
\begin{pmatrix}
0&{1\over 2}&{1\over 2}&0&0\\
0&{1\over 8}&{3\over 4}&{1\over 8}&0\\
0&0&{1\over 2}&{1\over 2}&0\\
0&0&{1\over 8}&{3\over 4}&{1\over 8}\\
0&0&0&{1\over 2}&{1\over 2}\\
\end{pmatrix}.
$$
\medskip
A related non-stationary subdivision is defined by the sequence of mask polynomials
\begin{equation}\label{ak}
a^{[k]}(z)={b_k(1+z)(1+c_kz)^3}, \ \ \ \text{with} \ \ c_k=\text{exp}({\lambda 2^{-k-1}}),\ b_k=1/(1+c_k)^3.
\end{equation}
The non-stationary subdivision $\{S_a^{[k]}\}$ generates exponential splines with integer knots,
piecewise spanned by $\{1,e^{\lambda x},xe^{\lambda x},x^2e^{\lambda x}\}.$
The matrices $S^{[k]}_1,S^{[k]}_2$ are
\medskip
$$
S^{[k]}_1=b_k
\begin{pmatrix}
3c_k^2+c_k^3&1+3c_k&0&0&0\\
c_k^3&3(c_k+c_k^2)&1&0&0\\
0&3c_k^2+c_k^3&1+3c_k&0&0\\
0&c_k^3&3(c_k+c_k^2)&1&0\\
0&0&3c_k^2+c_k^3&1+3c_k&0\\
\end{pmatrix},
$$
$$
S^{[k]}_2=b_k
\begin{pmatrix}
0&3c_k^2+c_k^3&1+3c_k&0&0\\
0&c_k^3&3(c_k+c_k^2)&1&0\\
0&0&3c_k^2+c_k^3&1+3c_k&0\\
0&0&c_k^3&3(c_k+c_k^2)&1\\
0&0&0&3c_k^2+c_k^3&1+3c_k\\
\end{pmatrix}.
$$
\medskip
We observe that $\lim_{k\to \infty} c_k=1$, and thus $\lim_{k\to\infty}a^{[k]}= a$.
The conditions for both Corollary \ref{nscor1} and Theorem \ref{nsprop2} are satisfied, and both forward and
backward trajectories of $\{\mathcal{F}_k\}$ converge.
The attractors of both forward and backward trajectories, for $\lambda=3$, are
presented in Figure \ref{nsfig}. The symmetric set is in Figure \ref{nsfig} is the attractor of the forward trajectory,
which is a segment of the cubic polynomial B-spline, and the non-symmetric set is the attractor of the backward trajectory,
and it is a part of the exponential B-spline.

\begin{figure}[!ht]
    \includegraphics[width=2.5in]{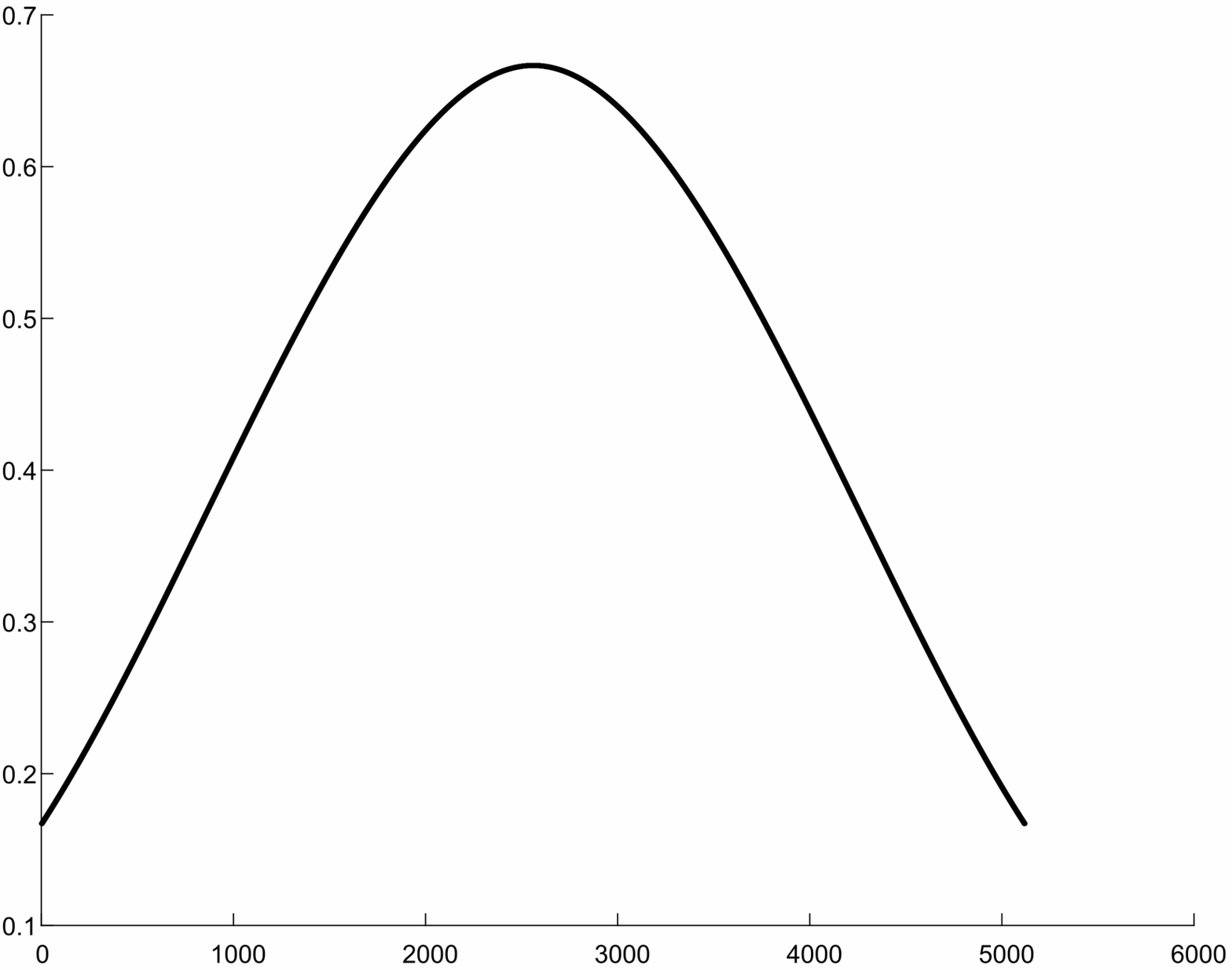}
    \hspace{10px}
    \includegraphics[width=2.5in]{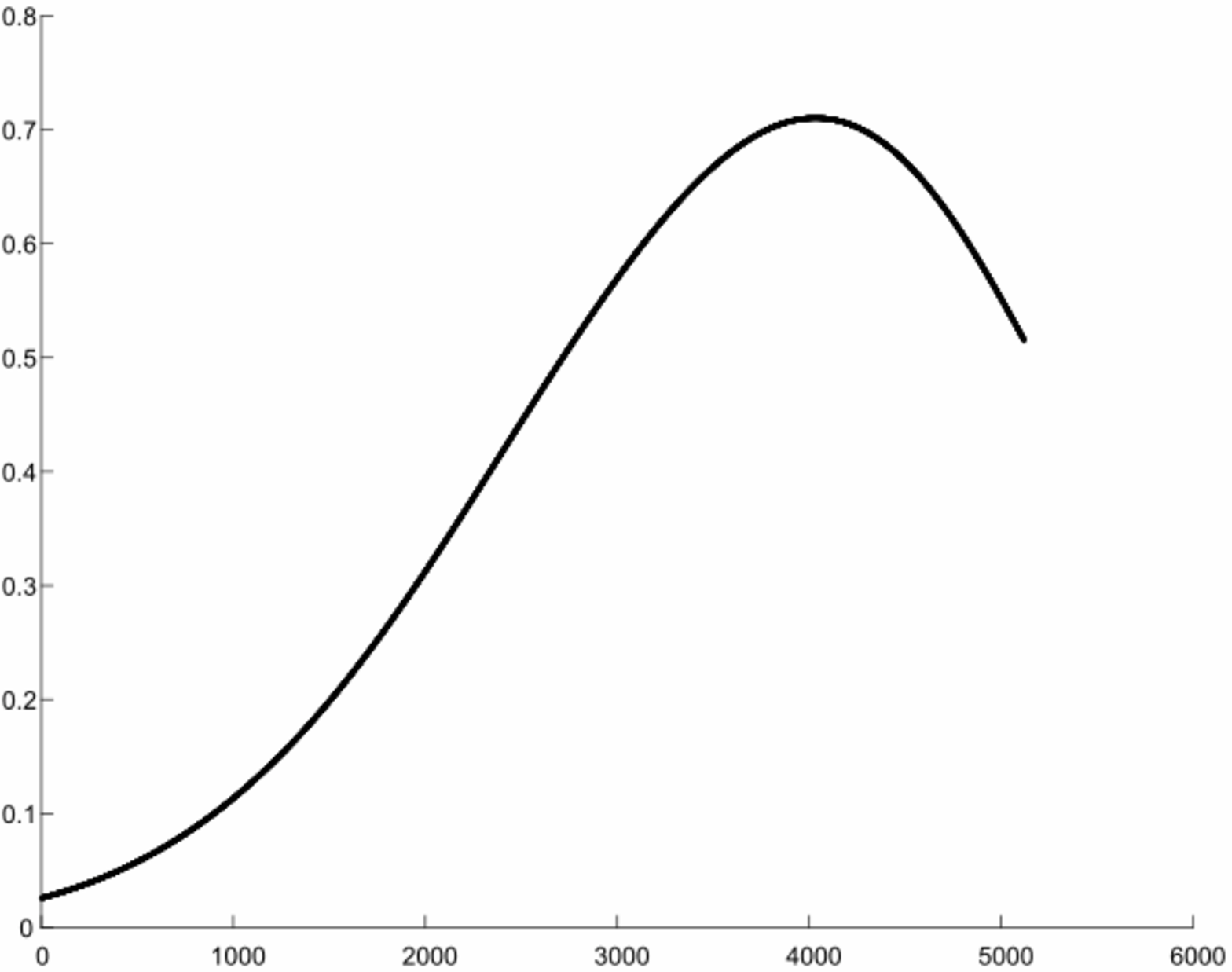}
    \caption{Left: Forward trajectory limit - cubic spline\\
    Right: Backward trajectory limit - exponential spline.}
    \label{nsfig}
\end{figure}
\end{example}.

\begin{example}(Case (iii)).
As we have learnt from Corollary \ref{backSFS}, backward SFS trajectories may converge under quite mild
conditions. In particular, an SFS derived from a non-stationary subdivision process, may converge even if it
is not asymptotically equivalent to a converging stationary process.
Let us consider the random non-stationary 4-point interpolatory subdivision process defined by the Laurent polynomials
\begin{equation}\label{randomns}
a^{[k]}(z)=-w_k(z^{-3}+z^3)+(0.5+w_k)(z^{-1}+z)+1,
\end{equation}
where $\{w_k\}_{k=1}^\infty$ are randomly chosen in an interval $I$. For the constant sequence $w_k=w$, this is
the Laurent polynomial representing the stationary $4$-point scheme presented in \cite{DGL}.
This random 4-point subdivision has been considered in \cite{Levin}, and it is shown there that the scheme is $C^1$ convergent
for $w_k\in [\epsilon, 1/8-\epsilon]$. Here we study the convergence for a larger interval $I$.
We define the SFS
$\mathcal{F}_k=\big\{\mathbb{R}^n; f_{1,k}, f_{2,k} \big \}$
where $f_{1,k}, f_{2,k}$ are define by (\ref{f1f2k}) with
the corresponding matrices $S^{[k]}_1,S^{[k]}_2$
$$
S^{[k]}_1=
\begin{pmatrix}
0&1&0&0&0&0\\
-w_k&0.5+w_k&0.5+w_k&-w_k&0&0\\
0&0&1&0&0&0\\
0&-w_k&0.5+w_k&0.5+w_k&-w_k&0\\
0&0&0&1&0&0\\
0&0&-w_k&0.5+w_k&0.5+w_k&-w_k\\
\end{pmatrix},
$$
$$
S^{[k]}_2=
\begin{pmatrix}
-w_k&0.5+w_k&0.5+w_k&-w_k&0&0\\
0&0&1&0&0&0\\
0&-w_k&0.5+w_k&0.5+w_k&-w_k&0\\
0&0&0&1&0&0\\
0&0&-w_k&0.5+w_k&0.5+w_k&-w_k\\
0&0&0&0&1&0\\
\end{pmatrix},
$$
and
$$
P=
\begin{pmatrix}
0&2&1&0&0&1\\
1&1&0&1&0&1\\
2&1&0&0&1&1\\
3&2&0&0&0&1\\
2&4&0&0&0&1\\
1&4&0&0&0&1\\
\end{pmatrix}.
$$
\end{example}

Considering Corollary \ref{backSFS} about the convergence of backward SFS trajectories,
we need the existence of a compact invariant set of $\{f_{r,i}\}$,
and that $\sum_{k=1}^\infty \prod_{i=1}^k L_{\mathcal{F}_i}<\infty$.
By numerical simulations we observe that for this example
$\sum_{k=1}^\infty \prod_{i=1}^k L_{\mathcal{F}_i}<\infty$ is satisfied if
$\{w_k\}$ are chosen according to a uniform random distribution in $I=[-b,b]$, with $0<b<0.86$.
We further conclude that for $\{w_k\}\in I$ there exists $m$ such that for any $i\in \mathbb{N}$,
$\prod_{i=k}^{k+m-1} L_{\mathcal{F}_i}<\mu<1$. Using Example \ref{Ex1} we can verify that there
exists a compact invariant set of the linear maps $\{\mathcal{A}_i\}$, where
$$\mathcal{A}_i=\mathcal{F}_i\circ \mathcal{F}_{i+1}\circ ....\circ \mathcal{F}_{i+m-1}.$$
By Corollary \ref{backSFS}, this guarantees the convergence of the
backward trajectories of $\{A_{km}\}$ to a unique attractor, and this implies the convergence
of the backward trajectories of $\{\mathcal{F}_i\}$.
Figures \ref{randw1}, \ref{randw2}, \ref{randw3} depict the convergence of the backward trajectories
$\{\Psi_k(A)\}$ of $\{\mathcal{F}_i\}$ for
$w_k\in [-0.2,0.2]$, $w_k\in [-0.4,0.4]$, $w_k\in [-0.8,0.8]$, respectively, and for $k=10,12,14$.
%\iffalse
\begin{figure}[h]
    \includegraphics[width=1.7in]{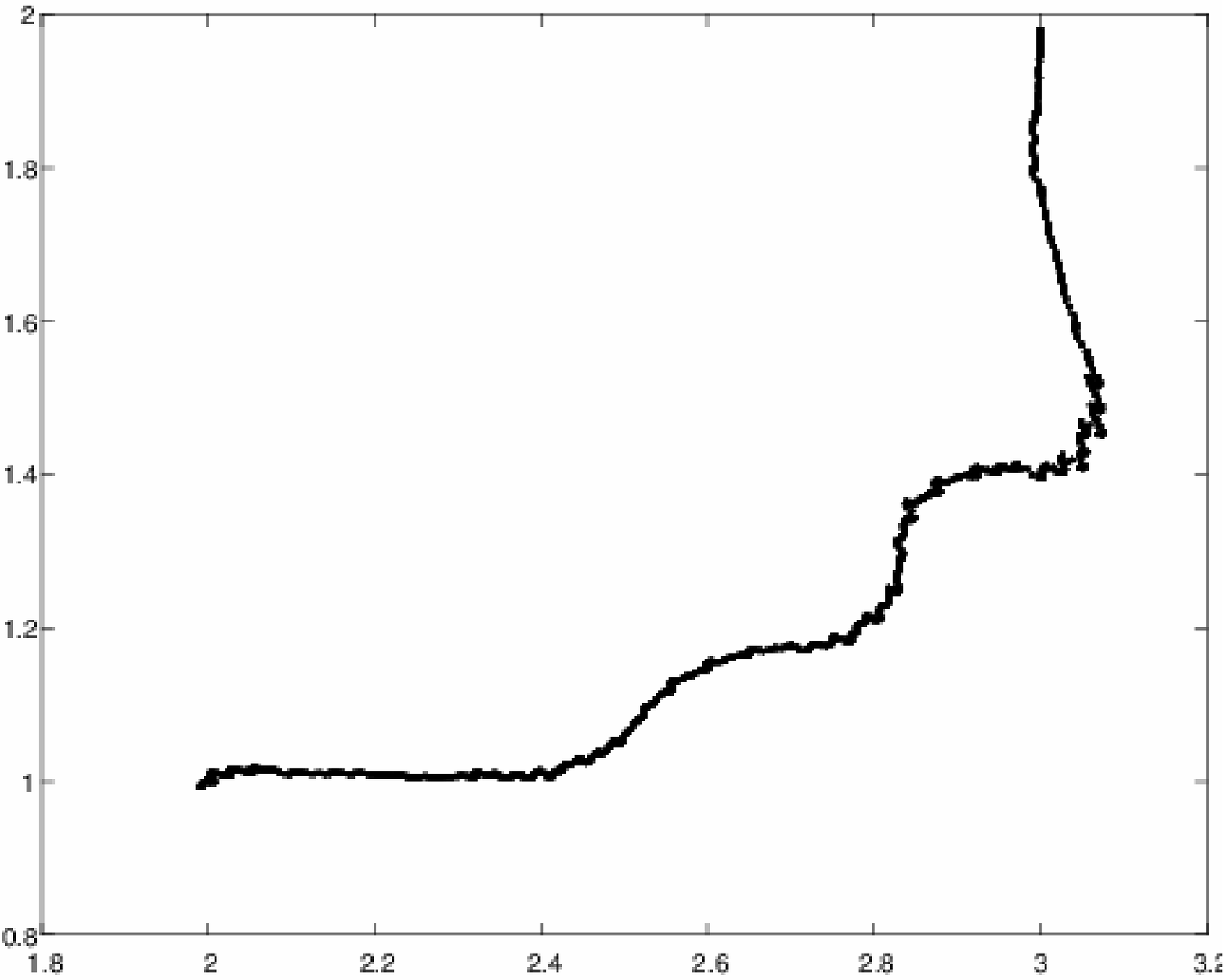}
    \hspace{10px}
    \includegraphics[width=1.7in]{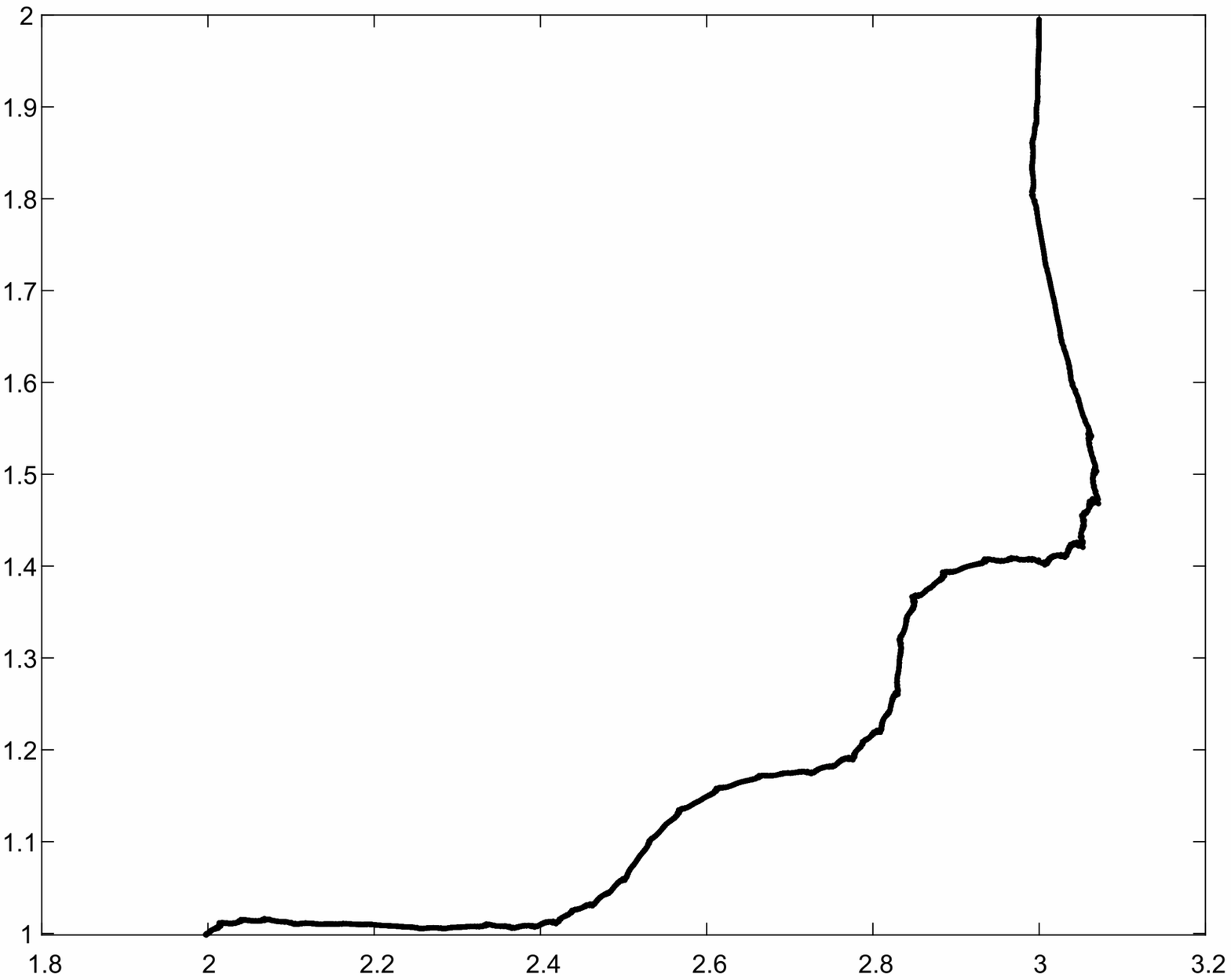}
    \hspace{10px}
    \includegraphics[width=1.7in]{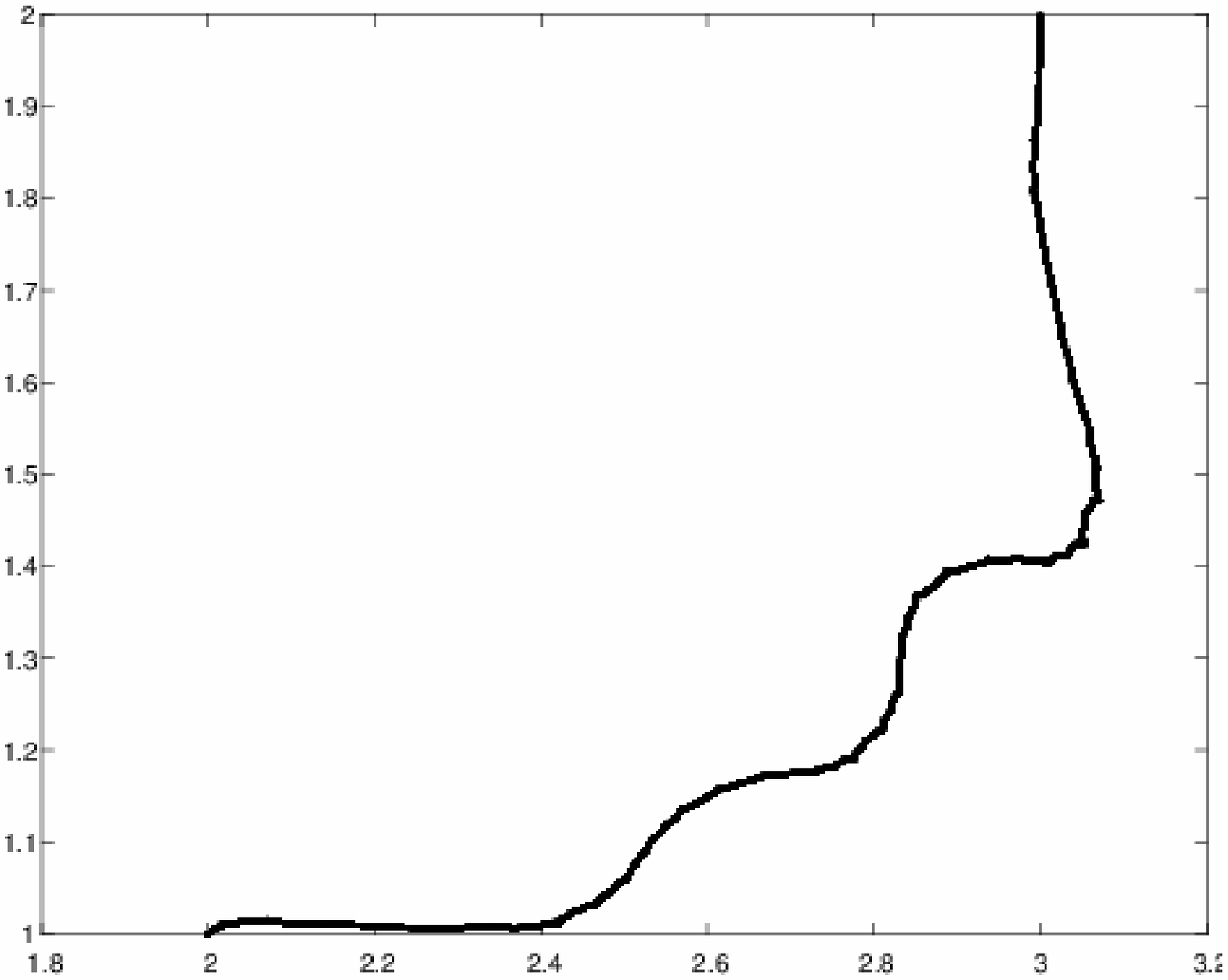}
    \caption{$w_k\in [-0.2,0.2]$; \ \ Backward trajectories:\ \ $\Psi_k(A)$, $k=10,12,14$.}
    \label{randw1}
\end{figure}
%\fi
\begin{figure}[h]
    \includegraphics[width=1.7in]{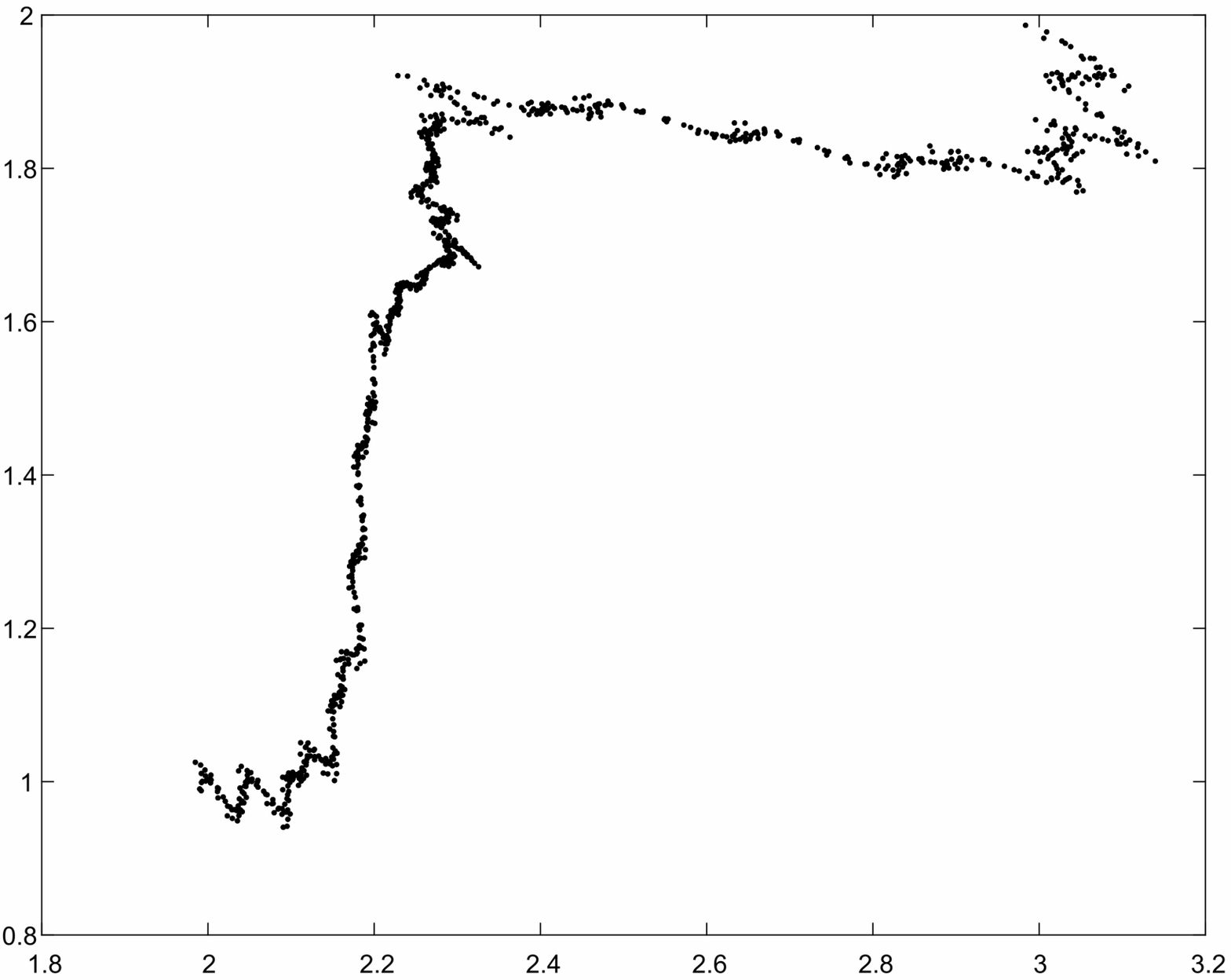}
    \hspace{10px}
    \includegraphics[width=1.7in]{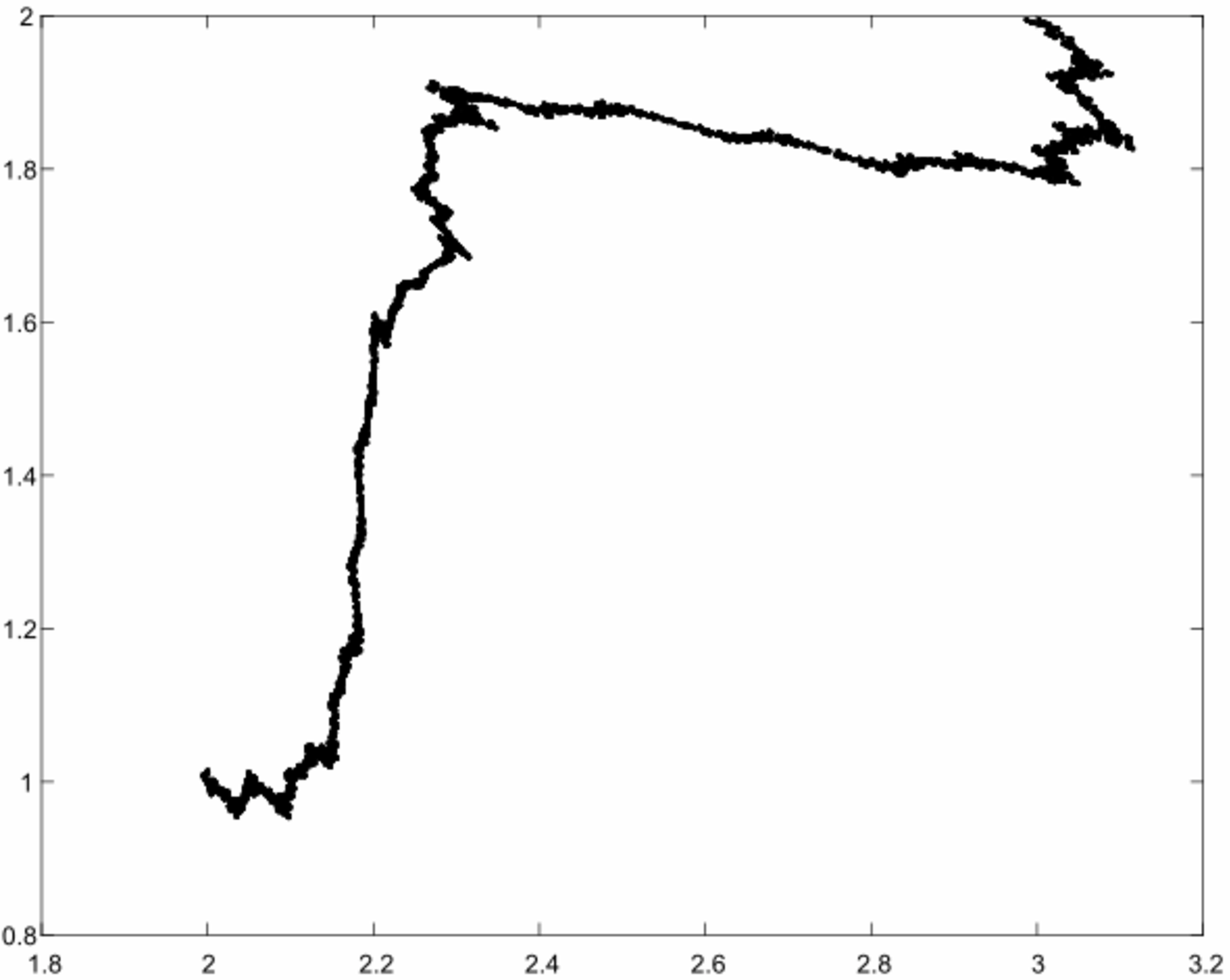}
    \hspace{10px}
    \includegraphics[width=1.7in]{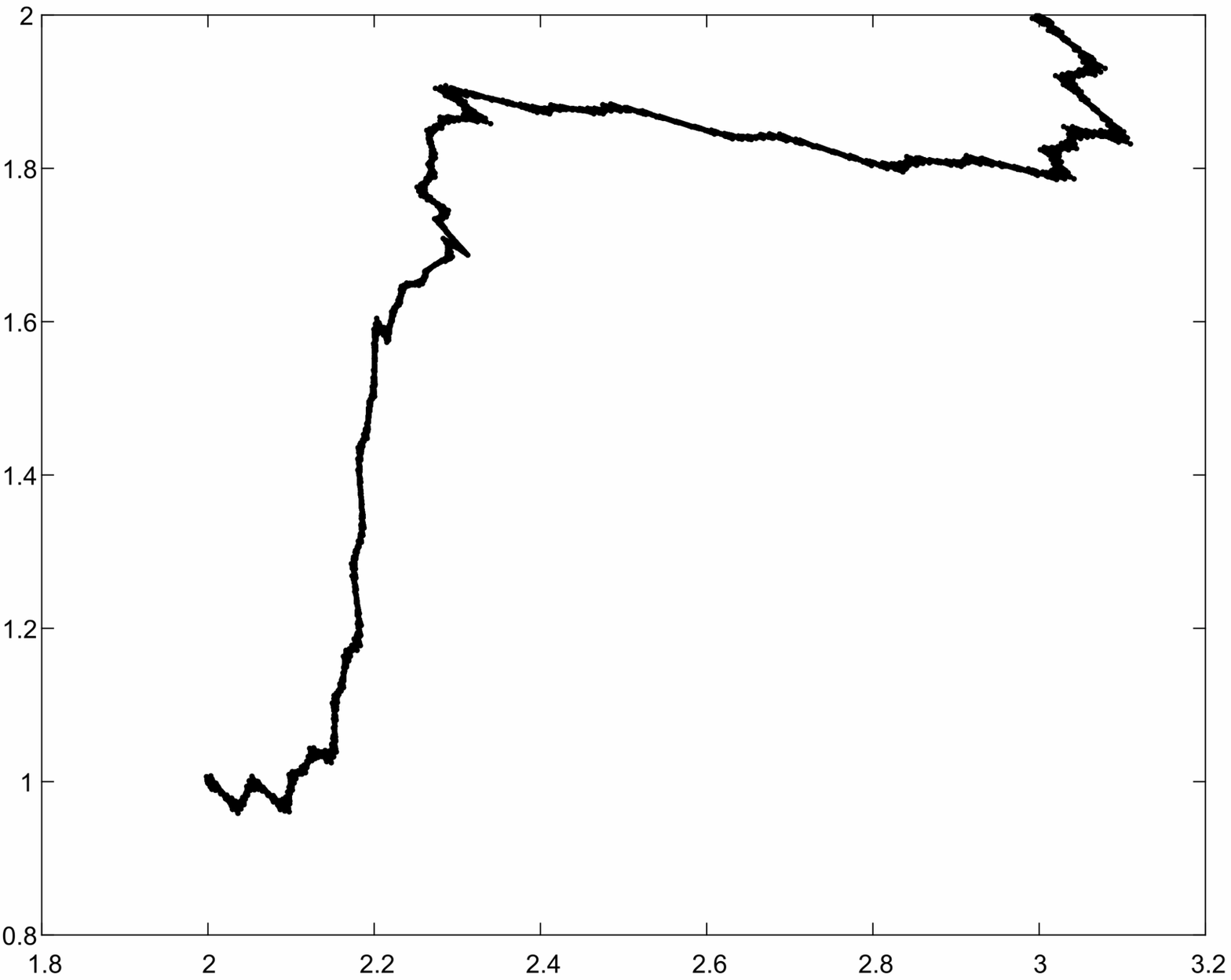}
    \caption{$w_k\in [-0.4,0.4]$; \ \ Backward trajectories:\ \ $\Psi_k(A)$, $k=10,12,14$.}
    \label{randw2}
\end{figure}

\begin{figure}[h]
    \includegraphics[width=1.7in]{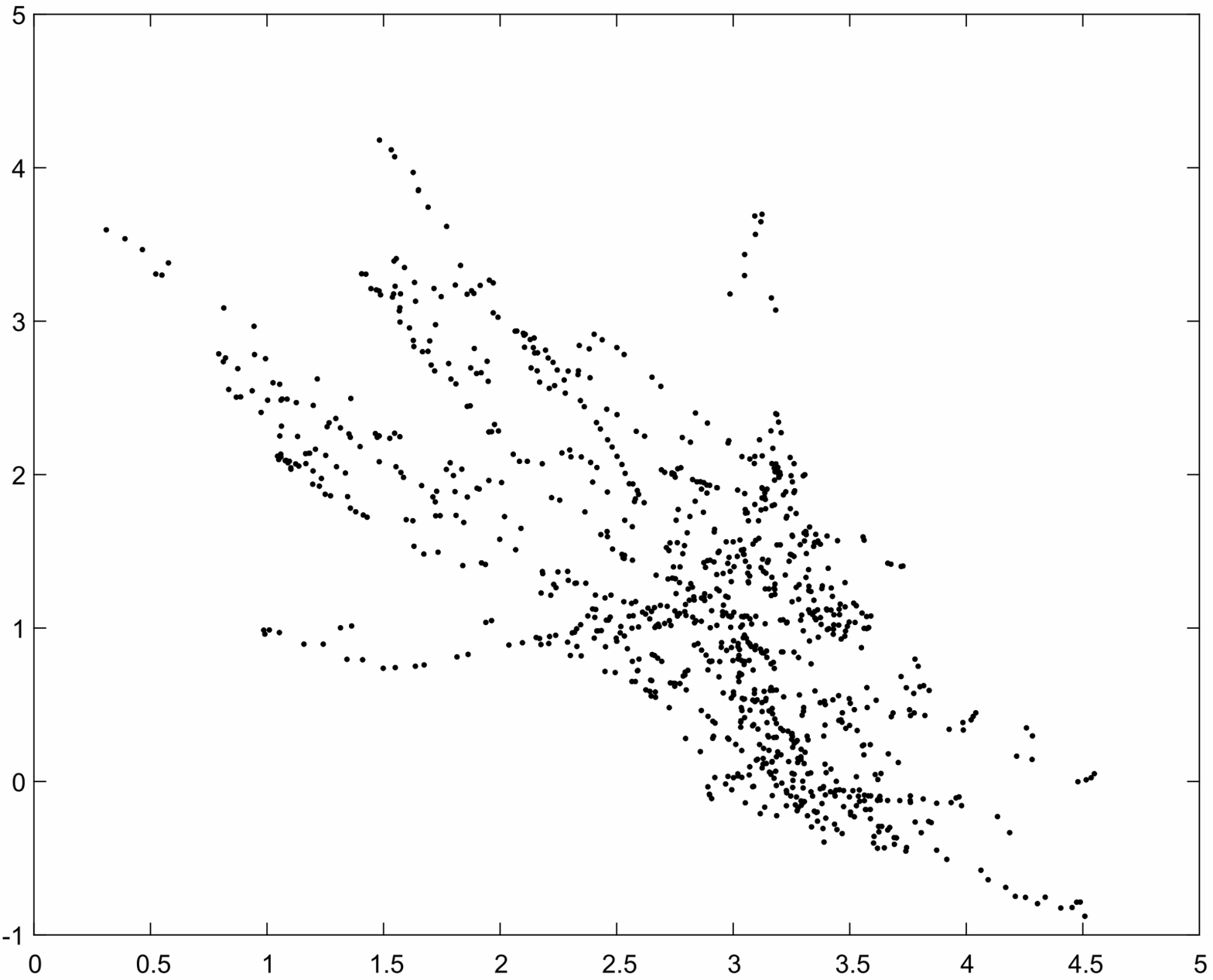}
    \hspace{10px}
    \includegraphics[width=1.7in]{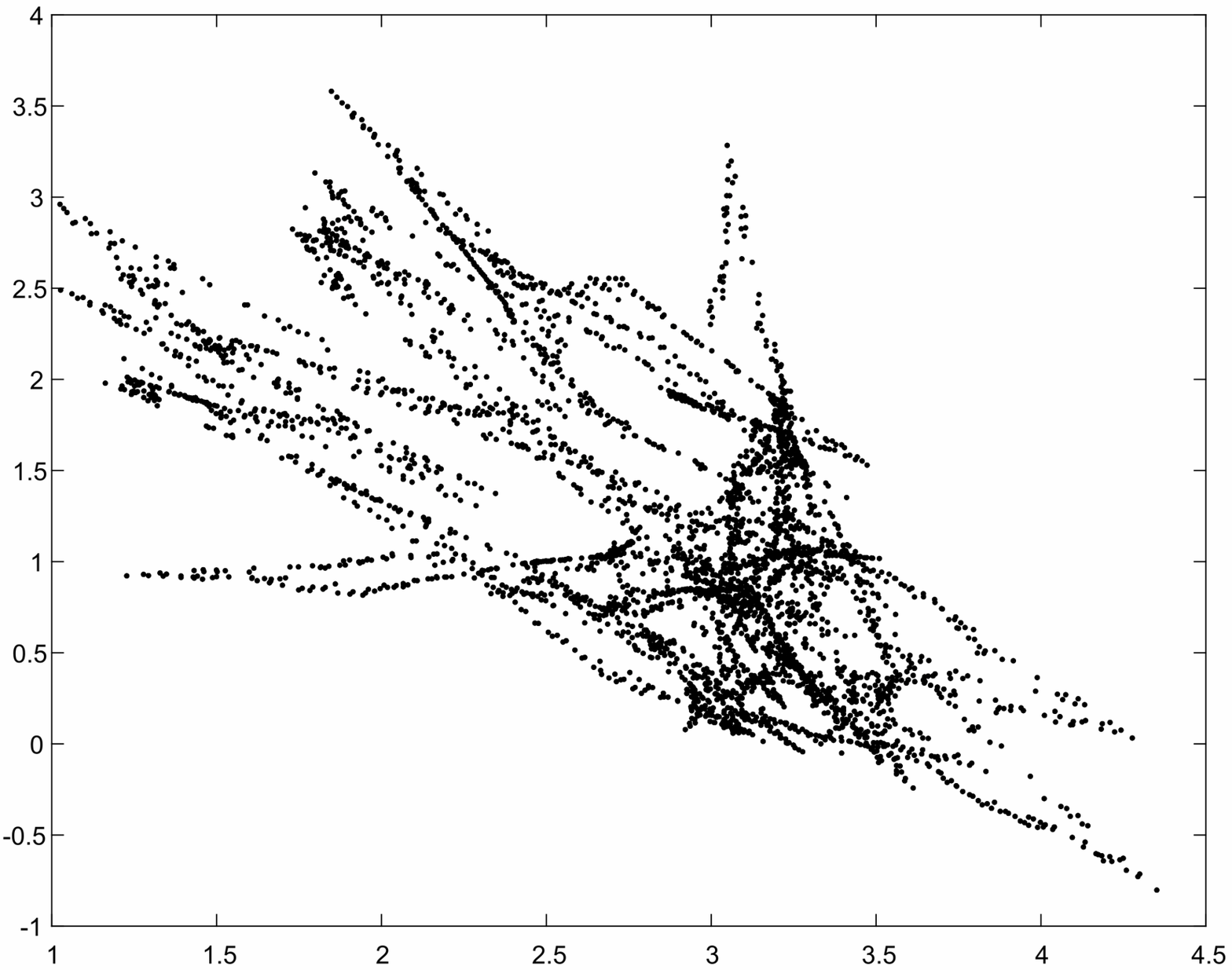}
    \hspace{10px}
    \includegraphics[width=1.7in]{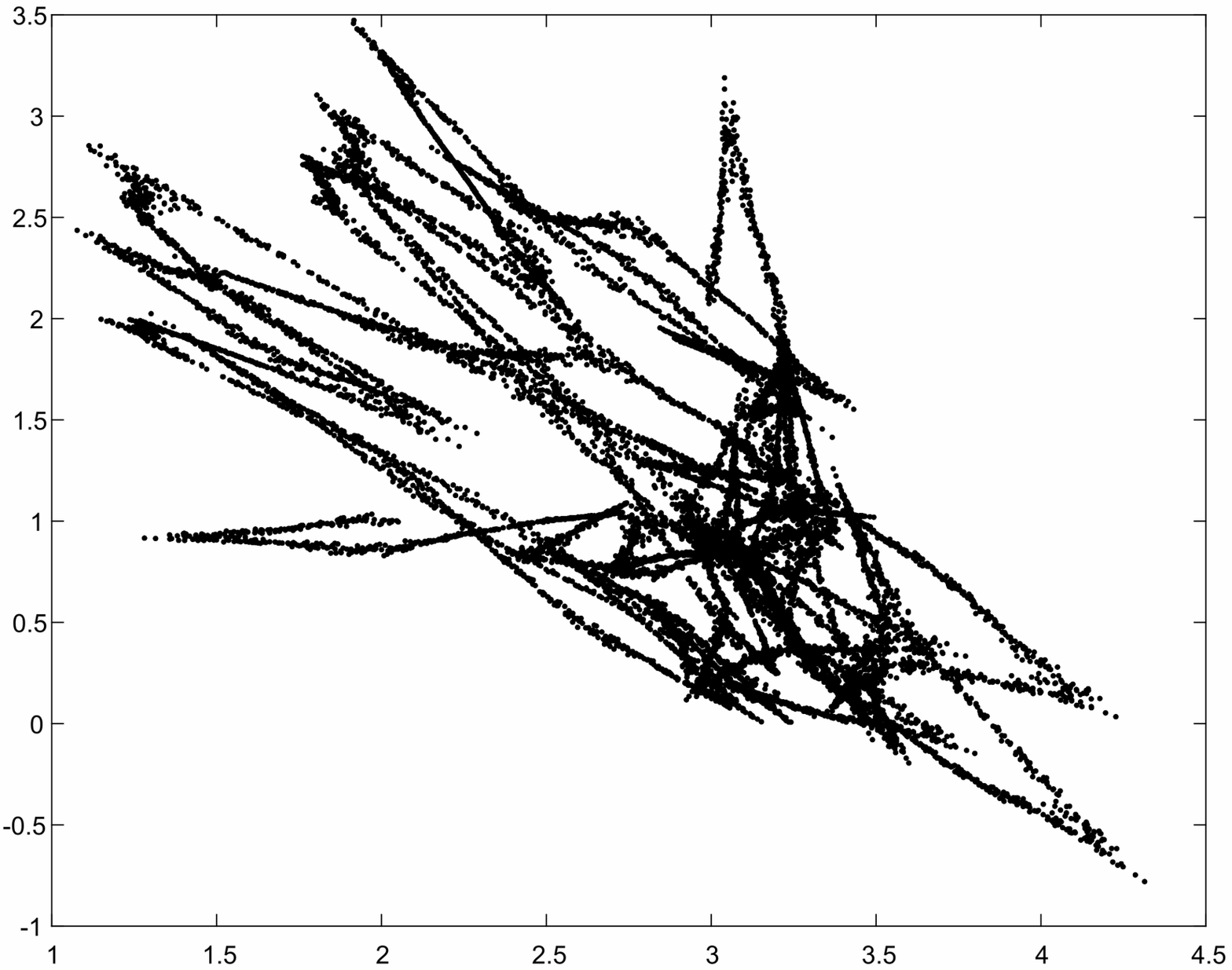}
    \caption{$w_k\in [-0.8,0.8]$; \ \ Backward trajectories:\ \ $\Psi_k(A)$, $k=10,12,14$.}
    \label{randw3}
\end{figure}
%\fi

\end{document}